\providecommand{\keywords}[1]
{
	\small	
	\textbf{Key words:} #1
}
\providecommand{\ams}[1]
{
	\small	
	\textbf{AMS-Classification:} #1
}
	\title{Strong Nonlocal-to-Local Convergence of the Cahn-Hilliard Equation and its Operator}
	\date{}
	\author{
	Helmut Abels\thanks{Fakultät für Mathematik,
		Universität Regensburg,
		93040 Regensburg,
		Germany, e-mail: \href{helmut.abels@mathematik.uni-regensburg.de}{helmut.abels@mathematik.uni-regensburg.de},}$\;$ and Christoph Hurm
		\thanks{Fakultät für Mathematik,
		Universität Regensburg,
		93040 Regensburg,
		Germany, e-mail: \href{christoph.hurm@mathematik.uni-regensburg.de}{christoph.hurm@mathematik.uni-regensburg.de}}
		}
\begin{document}
	\maketitle
	\renewcommand\abstractname{Abstract}
	\begin{abstract}
		We prove convergence of a sequence of weak solutions of the nonlocal Cahn-Hilliard equation to the strong solution of the corresponding local Cahn-Hilliard equation. The analysis is done in the case of sufficiently smooth bounded domains with Neumann boundary condition and a $W^{1,1}$-kernel. The proof is based on the relative entropy method.
		Additionally, we prove the strong $L^2$-convergence of the nonlocal operator to the negative Laplacian together with a rate of convergence.
	\end{abstract}
\keywords{Cahn-Hilliard equation, nonlocal Cahn-Hilliard equation, nonlocal operators, nonlocal-to-local convergence, singular limit}
\newline
\ams{35B40, 35K25, 35K55, 35D30, 45K05}
	%-----------------------------------
	%	Abstract
	%-----------------------------------
	
	\section{Introduction}\label{intro}
	The Cahn-Hilliard equation was originally introduced in \cite{Cahn} to model the phenomena of spinodal decomposition in binary alloys. Since then, it has been frequently used in a variety of different mathematical models describing phenomena such as population dynamics, image processing, two-phase flows and tumor growth, cf. \cite{Davoli1, Davoli2, Elbar,Knopf, Melchoinna}.
	\newline
	The (local) Cahn-Hilliard equation as introduced in \cite{Cahn} reads as follows:
	\begin{align}
		\partial_tc &= m\Delta\mu \;\;\;\;\;\;\;\;\;\;\;\;\;\;\;\;\;\text{in}\;\Omega_T,\label{CH1} \\
		\mu &= -\Delta c + f^\prime(c) \;\;\;\;\;\;\text{in}\;\Omega_T,\label{CH2}
	\end{align}
	where $\Omega_T:= (0,T)\times\Omega$, $T>0$ and $\Omega\subset\mathbb{R}^n$, $n\leq 3$, is a bounded domain with $C^2$-boundary. Further, we require the following
	initial condition
	\begin{align}\label{CHIC}
		c|_{t=0} = c_0\;\;\;\;\;\;\text{in }\Omega
	\end{align}
	and boundary conditions
	\begin{align}\label{CHBC}
		\partial_{\mathbf{n}}c=0,\;\;\partial_{\mathbf{n}}\mu=0\;\;\;\;\;\;\;\;\;\;\text{on }\partial\Omega\times(0,T).
	\end{align}
	Here, $c$ is a concentration parameter and $\mu$ the chemical potential associated to $c$. Furthermore, $m>0$ is the mobility coefficient and $f$ is the free energy density. Typical choices for the potential in the free energy density are a smooth double-well potential, e.g. $f(c) := K(1-c^2)^2$ for some constant $K>0$, or the logarithmic potential $f(c) := \frac{\theta}{2}\big((1-c)\ln(1-c) + (1+c)\ln(1+c)\big) - \frac{\theta_c}{2}c^2$ for $c\in[-1,1]$, where we assume $0<\theta<\theta_c$. We note that the Cahn-Hilliard equation describes the $H^{-1}$-gradient flow of the free energy functional 
	\begin{align}\label{freeenergyCH}
		\mathcal{E}^{CH}(c) := \int_{\Omega}\frac{1}{2}|\nabla c|^2 + f(c)\:\text{d}x.
	\end{align}
	The Cahn-Hilliard equation has already been studied very intensively and there exists an extensive literature (see \cite{Davoli1, Davoli2, Elbar,Knopf, Melchoinna} and the references therein).
	\newline
	\newline
	The nonlocal counterpart of the Cahn-Hilliard equation has first been presented by Giacomin and Lebowitz in \cite{Giacomin}, where the authors considered the hydrodynamic limit of a microscopic model describing an $n$-dimensional lattice gas evolving via the (Poisson) nearest neighbour exchange process. 
	The nonlocal Cahn-Hilliard equation can be interpreted as the $H^{-1}$-gradient flow of the non-local free energy functional
	\begin{align}\label{nonlocenergy}
	\mathcal{E}^{NL}_\varepsilon(c) := \frac{1}{4}\int_{\Omega}\int_{\Omega}J_\varepsilon(x-y)\big|c(x) - c(y)\big|^2\:\text{d}y\text{d}x + \int_{\Omega}f(c(x))\:\text{d}x.
	\end{align}
	This leads to the following system:
	\begin{align}
		\partial_tc &= m\Delta\mu \;\;\;\;\;\;\;\;\;\;\;\;\;\text{in}\;\Omega_T,\label{NLCH1} \\
		\mu &=  \mathcal{L}_\varepsilon c + f^\prime(c)\;\;\;\;\text{in }\Omega_T,\label{NLCH2}
	\end{align}
	where we define 
	\begin{align*}
		\mathcal{L}_\varepsilon c(x) := -\int_{\Omega}J_\varepsilon(|x-y|)c(y)\:\text{d}y + \int_{\Omega}J_\varepsilon(|x-y|)c(x)\:\text{d}y\;\;\;\text{for all }x\in\Omega.
	\end{align*}
	Further, we require the
	initial condition
	\begin{align}\label{NLCHIC}
	c|_{t=0} = c_{0}\;\;\;\;\;\;\text{in }\Omega
	\end{align}
	and the boundary condition
	\begin{align}\label{NLCHBC}
	\partial_{\mathbf{n}}\mu=0\;\;\;\;\;\;\;\;\;\;\text{on }\partial\Omega\times(0,T).
	\end{align}
	Here, $J_\varepsilon: \mathbb{R}^n\rightarrow[0,\infty)$ is a non-negative function. More precisely, we assume that $J_\varepsilon(x) = \frac{\rho_\varepsilon(x)}{|x|^2}$ for all $x\in\mathbb{R}^n$ and $J_\varepsilon\in W^{1,1}(\mathbb{R}^n)$, where $(\rho_\varepsilon)_{\varepsilon>0}$ is a family of mollifiers satisfying  
	\begin{align*}
		&\rho_\varepsilon: \mathbb{R}\rightarrow [0,\infty),\;\;\;\rho_\varepsilon\in L^1(\mathbb{R}),\;\;\;\rho_\varepsilon(r)=\rho_\varepsilon(-r)\;\;\;\text{for all }r\in\mathbb{R},\;\varepsilon>0, \\
		&\int_{0}^\infty\rho_\varepsilon(r)\:r^{n-1}\:\text{d}r = \frac{2}{C_n}\;\;\;\text{for all }\varepsilon>0, \\
		&\lim\limits_{\varepsilon\searrow 0}\int_{\delta}^\infty\rho_\varepsilon(r)\:r^{n-1}\:\text{d}r = 0\;\;\;\text{for all }\delta>0,
	\end{align*}
	where $C_n := \int_{\mathbb{S}^{n-1}}|e_1\cdot\sigma|^2\:\text{d}\mathcal{H}^{n-1}(\sigma)$.
	Moreover, the singular potential $f$ obeys the same assumptions as in \cite{Gal}, i.e.  
	\begin{align*}
	&f\in C^0([-1,1])\cap C^1(-1,1),\\
	&\lim\limits_{s\rightarrow -1}f^\prime(s) = -\infty,\;\;\;\;\lim\limits_{s\rightarrow 1}f^\prime(s)= +\infty,\;\;\;\;f^{\prime\prime}(s) \geq -\alpha > 0.
	\end{align*}
	Observe that the logarithmic potential mentioned above fulfills these assumptions.
	In our analysis, it is also possible to consider regular potentials $f:\mathbb{R}\rightarrow\mathbb{R}$, which satisfy $f^{\prime\prime}(s) \geq -\alpha$ for all $s\in\mathbb{R}$ together with a growth condition $|f^\prime(s)| \leq C(|s|^3+1)$ for all $s\in\mathbb{R}$, since we only need that $f^{\prime\prime}$ is bounded from below. A typical choice for $f$ is the double-well potential $f(c) := K(1-c^2)^2$ for some constant $K>0$.

	The nonlocal Cahn-Hilliard equation has already been subject to an intense research activity in the recent years. 
	For instance, in the case of singular potentials, the authors in \cite{Gal} proved well-posedness and regularity of weak solutions. Moreover, they established the validity of the strict separation property in two spatial dimensions. 
	For further results on the nonlocal Cahn-Hilliard equation, we refer the reader to \cite{Davoli1, Davoli2, Elbar, Gal,Knopf, Melchoinna} and the references therein. 
	\newline
	\newline
	It is the goal of this contribution to show strong convergence of the nonlocal operator $\mathcal{L}_\varepsilon$ to $-\Delta$ with respect to the $L^2$-topology and to show convergence of weak solutions of \eqref{NLCH1} - \eqref{NLCH2} together with \eqref{NLCHIC} - \eqref{NLCHBC} to the strong solution of \eqref{CH1} - \eqref{CH2} together with \eqref{CHIC} - \eqref{CHBC} under suitable conditions on the initial values.
	
	Nonlocal-to-local asymptotics have already been studied in \cite{Vasquez}, where the author proved the convergence of weak solutions of the fractional heat equation to the fundamental solution as $t\rightarrow\infty$. In \cite{Crismale}, the authors studied the limits $s\rightarrow 0^+$ and $s\rightarrow 1^-$ for s-fractional heat flows in a cylindrical domain with homogeneous Dirichlet boundary conditions. Convergence of nonlocal quadratic forms to local quadratic forms of gradient type was shown by Foghem Gounoue et al. in \cite{Kassmann}.
	 
	Convergence of solutions of the nonlocal Cahn-Hilliard equation, i.e., \eqref{NLCH1} - \eqref{NLCH2}, to the local Cahn-Hilliard equation, i.e., \eqref{CH1} - \eqref{CH2}, has already been proved by Melchionna et al. in \cite{Melchoinna} in the case of periodic boundary conditions and  a regular free energy density and by Davoli et al. in \cite{Davoli1} in the case of periodic boundary conditions and a singular free energy density. In the case of Neumann boundary conditions, convergence has been proved by Davoli et al. in \cite{Davoli3} with an additional viscosity term in the nonlocal Cahn-Hilliard equation and in \cite{Davoli2} for $W^{1,1}$-kernels. The authors in \cite{Elbar} proved convergence of the nonlocal to the local degenerate Cahn-Hilliard equation. 
	In \cite{AbelsTerasawa1} and \cite{Liang}, the authors proved the nonlocal-to-local limit for a coupled Navier--Stokes/Cahn--Hilliard system.
	We also mention that these results are based on the works by J. Bourgain, H. Brezis and P. Mironescu \cite{Bourgain}, where the authors proved nonlocal-to-local convergence for functionals of the form \eqref{nonlocenergy}, as well as the $\Gamma$-convergence results by A.C. Ponce in \cite{Ponce1, Ponce2}. 
	 
	 In this contribution, however, we use Fourier transforms, which then guarantee a rate of convergence for the nonlocal operator in the case of $\Omega = \mathbb{R}^n$. Using a reflection argument together with perturbation and a localization argument, we can even show a rate of convergence in sufficiently smooth bounded domains $\Omega\subset\mathbb{R}^n$.
	\newline
	\newline
	The structure of this paper is the following: In Section \ref{preliminaries}, we recall some definitions and preliminary results. In Section \ref{convop}, we prove first convergence results of the nonlocal operator $\mathcal{L}_\varepsilon$. In Section \ref{main}, we then state and prove the main theorem about the strong convergence of the nonlocal operator. This will be done by localization. Finally, in Section \ref{nonloctoloc}, we apply our results from Section \ref{main} to prove nonlocal-to-local convergence of the Cahn-Hilliard equation using the relative entropy method.

	\section{Preliminaries}\label{preliminaries}
	%-----------------------------------
	%	Notation
	%-----------------------------------
	In this section, we recollect some preliminary results, which we need throughout the paper. First, we briefly recall the Fourier transform $\mathcal{F}: L^2(\mathbb{R}^n) \rightarrow L^2(\mathbb{R}^n)$ given by
	\begin{align*}
		\mathcal{F}(f)(\xi) := \int_{\mathbb{R}^n}e^{-ix\cdot\xi}f(x)\:\text{d}x. 
	\end{align*}
	We observe that this map is well-defined and defines an isometric automorphism on $L^2(\mathbb{R}^n)$, cf. Plancherel`s Theorem. 
	Further, we recall that the inverse of the negative Laplacian $-\Delta$ with Neumann boundary condition is a well-defined isomorphism
	\begin{align*}
		(-\Delta)^{-1} : \{c\in (H^1(\Omega))^\prime: c_\Omega=0\} \rightarrow \{c\in H^1(\Omega): c_\Omega=0\}. 
	\end{align*}
	For $c\in (H^1(\Omega))^\prime$, we define $c_\Omega = \frac{1}{|\Omega|}\langle c,1\rangle$.
	Next, we state some important inequalities.
	\newtheorem{lemma0}{Lemma}[section]
	\begin{lemma0}\label{lemma0}
		For every $\delta>0$ there exist constants $C_\delta>0$ and $\varepsilon_\delta>0$ with the following properties:
		\begin{enumerate}
			\item  For every sequence $(f_\varepsilon)_{\varepsilon>0}\subset H^1(\Omega)$, there holds
			\begin{align}
				\|f_{\varepsilon_1}-f_{\varepsilon_2}\|_{H^1(\Omega)}^2 &\leq \delta\int_{\Omega}\int_{\Omega}J_{\varepsilon_1}(x,y)\big|\nabla f_{\varepsilon_1}(x) - \nabla f_{\varepsilon_2}(y)\big|^2\:\textup{d}y\textup{d}x \nonumber\\
				&+ \delta \int_{\Omega}\int_{\Omega}J_{\varepsilon_2}(x,y)\big|\nabla f_{\varepsilon_1}(x) - \nabla f_{\varepsilon_2}(y)\big|^2\:\textup{d}y\textup{d}x + C_\delta\|f_{\varepsilon_1} - f_{\varepsilon_2}\|_{L^2(\Omega)}^2.
			\end{align}
			\item For every sequence $(f_\varepsilon)_{\varepsilon>0}\subset L^2(\Omega)$, there holds
			\begin{align}\label{ineqDav}
				\|f_{\varepsilon_1} - f_{\varepsilon_2}\|_{L^2(\Omega)}^2 \leq \delta\mathcal{E}_{\varepsilon_1}(f_{\varepsilon_1}) + \delta\mathcal{E}_{\varepsilon_2}(f_{\varepsilon_2}) + C_\delta\|f_{\varepsilon_1} - f_{\varepsilon_2}\|_{(H^1(\Omega))^*}^2.
			\end{align}
		\end{enumerate}
	\end{lemma0}
	\begin{proof}
		For a proof, we refer to \cite[Lemma 4(2)]{Davoli1}.
	\end{proof}
	Here, the term $\mathcal{E}_\varepsilon$ is defined by
	\begin{align*}
		\mathcal{E}_\varepsilon(c) := \frac{1}{4}\int_{\Omega}\int_{\Omega}J_\varepsilon(x-y)\big|c(x) - c(y)\big|^2\:\text{d}y\text{d}x
	\end{align*}
	for all $c\in H^1(\Omega)$, i.e., the first part of the nonlocal energy functional $\mathcal{E}^{NL}_\varepsilon$ in \eqref{nonlocenergy}. In the limit $\varepsilon\searrow 0$, this term behaves as 
	\begin{align}\label{nonlockonv}
		\lim\limits_{\varepsilon\searrow 0}\mathcal{E}_\varepsilon(c) = \frac{1}{2}\int_{\Omega}\big|\nabla c(x)\big|^2\:\text{d}x
	\end{align}
	for all $c\in H^1(\Omega)$. For details, we refer to \cite{Bourgain, Davoli1}.
	\newtheorem{lemTraf}[lemma0]{Lemma}
	\begin{lemTraf}\label{lemTraf}
		Let $a,b\in L^1(\mathbb{R}^{n-1})$, $n\geq 2$. Then, it holds
		\begin{align*}
			\int_{\mathbb{R}^{n-1}}\int_{\mathbb{R}^{n-1}}a(x-y)\:b((1-t)y+tx)\:\textup{d}x\textup{d}y = \|a\|_{L^1(\mathbb{R}^{n-1})}\|b\|_{L^1(\mathbb{R}^{n-1})}
		\end{align*}
		for all $t\in[0,1]$.
	\end{lemTraf}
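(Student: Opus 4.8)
The plan is to reduce the double integral to a product of two single integrals by one linear change of variables. First I would introduce the new variables
\begin{align*}
u := x-y, \qquad v := (1-t)y+tx,
\end{align*}
which, upon solving the linear system, gives $x = v+(1-t)u$ and $y = v-tu$. The associated linear map on $\mathbb{R}^{n-1}\times\mathbb{R}^{n-1}$ has the block matrix
\begin{align*}
\begin{pmatrix} I_{n-1} & -I_{n-1} \\ tI_{n-1} & (1-t)I_{n-1}\end{pmatrix},
\end{align*}
and since all four blocks are scalar multiples of the identity, its determinant equals $\bigl(\det\left(\begin{smallmatrix} 1 & -1 \\ t & 1-t\end{smallmatrix}\right)\bigr)^{n-1} = (1-t+t)^{n-1} = 1$, independently of $t\in[0,1]$ (indeed for every $t\in\mathbb{R}$). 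Hence the substitution is a volume-preserving bijection of $\mathbb{R}^{2(n-1)}$; equivalently, $(u,v)\mapsto(v+(1-t)u,\,v-tu)$ is the inverse map and also has Jacobian determinant $1$.

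Next I would apply the change-of-variables theorem. Applying it first to the nonnegative integrand $|a(x-y)|\,|b((1-t)y+tx)|$ shows that this function lies in $L^1(\mathbb{R}^{n-1}\times\mathbb{R}^{n-1})$ with integral $\|a\|_{L^1(\mathbb{R}^{n-1})}\|b\|_{L^1(\mathbb{R}^{n-1})}<\infty$; in particular Fubini's theorem applies to the original signed integrand. Performing the same substitution on $a(x-y)\,b((1-t)y+tx)$ then yields
\begin{align*}
\int_{\mathbb{R}^{n-1}}\int_{\mathbb{R}^{n-1}}a(x-y)\,b((1-t)y+tx)\,\mathrm{d}x\,\mathrm{d}y
&= \int_{\mathbb{R}^{n-1}}\int_{\mathbb{R}^{n-1}}a(u)\,b(v)\,\mathrm{d}u\,\mathrm{d}v \\
&= \|a\|_{L^1(\mathbb{R}^{n-1})}\,\|b\|_{L^1(\mathbb{R}^{n-1})},
\end{align*}
which is the assertion.

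There is essentially no serious obstacle here; the only point requiring a moment's care is the Jacobian computation and the observation that the determinant equals $1$ for \emph{all} $t\in[0,1]$, including the boundary cases $t=0$ and $t=1$, where the map still is a linear automorphism of $\mathbb{R}^{2(n-1)}$. One could alternatively avoid the block-determinant formula altogether by verifying directly that $\Phi(u,v)=(v+(1-t)u,\,v-tu)$ is a bijection with $|\det D\Phi|\equiv 1$ and then invoking the change-of-variables theorem in the form $\int g\circ\Phi\,|\det D\Phi| = \int g$ with $g(x,y)=a(x-y)\,b((1-t)y+tx)$.
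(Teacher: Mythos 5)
Your proof is correct and follows essentially the same route as the paper: the identical linear change of variables $(x,y)\mapsto(x-y,(1-t)y+tx)$, the observation that its Jacobian determinant is $1$ for all $t$, and then change of variables plus Fubini. The only (welcome) addition is the explicit integrability check on $|a(x-y)||b((1-t)y+tx)|$ before invoking Fubini, which the paper leaves implicit.
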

	\begin{proof}
		We define the mapping 
		\begin{align*}
			\Phi_t: \mathbb{R}^{n-1}\times\mathbb{R}^{n-1} &\rightarrow \mathbb{R}^{n-1}\times\mathbb{R}^{n-1}, \\
			(x,y) &\mapsto (x-y, (1-t)y +tx).
		\end{align*}
		Then, we have
		\begin{align*}
			\det\text{D}\Phi_t(x,y) = \det\left(\begin{array}{cc}
			\text{Id} & -\text{Id} \\
			t\text{Id} & (1-t)\text{Id}
			\end{array}\right) = 1
		\end{align*}
		for all $t\in[0,1]$. Using change of variables and Fubini`s Theorem, we obtain
		\begin{align*}
			\int_{\mathbb{R}^{n-1}}\int_{\mathbb{R}^{n-1}}a(x-y)\:b((1-t)y+tx)\:\textup{d}x\textup{d}y &= \left(\int_{\mathbb{R}^{n-1}}a(x)\:\text{d}x\right)\left(\int_{\mathbb{R}^{n-1}}b(y)\:\text{d}y\right) \\
			&=\|a\|_{L^1(\mathbb{R}^{n-1})}\|b\|_{L^1(\mathbb{R}^{n-1})}
		\end{align*}
		for all $t\in[0,1]$.
	\end{proof}
	\newtheorem{remdense}[lemma0]{Remark}
	\begin{remdense}\label{remdense}
		\upshape
		The space $\{c\in C^k_c(\overline{\mathbb{R}^n_+}): \partial_{\mathbf{n}}c|_{\partial\mathbb{R}^n_+}=0\}$ is dense in
		$\{c\in H^k(\mathbb{R}^n_+): \partial_{\mathbf{n}}c|_{\partial\mathbb{R}^n_+}=0\}$. If $k=3$, the proof is based on the following idea:
		\newline
		Since $C^\infty_0(\overline{\mathbb{R}^n_+})$ is dense in $H^3(\mathbb{R}^n_+)$, for any $c\in H^3(\mathbb{R}^n_+)$ there exists a sequence $(\tilde{c}_j)_{j\in\mathbb{N}} \subset C^\infty_0(\overline{\mathbb{R}^n_+})$ such that $\tilde{c}_j \rightarrow c$ in $H^3(\mathbb{R}^n_+)$ as $j\rightarrow\infty$. Next, we consider the auxiliary problem
		\begin{align*}
			(1-\Delta)w_j &= 0\;\;\;\;\;\;\;\;\;\;\;\;\:\text{in }\mathbb{R}^n_+, \\
			\mathbf{n}\cdot\nabla w_j &= \mathbf{n}\cdot\nabla \tilde{c}_j\;\;\;\text{on }\partial\mathbb{R}^n_+
		\end{align*} 
		for all $j\in\mathbb{N}$. Then, by linear elliptic theory, there exists a solution $w_j\in C^3(\overline{\mathbb{R}^n_+})$ for all $j\in\mathbb{N}$. Finally, the sequence $c_j := \tilde{c}_j - w_j$, $j\in\mathbb{N}$, has the desired properties.
		\newline
		\newline
		Next, we consider the case of the bent half-space.
		Let $\gamma\in C^{k}_b(\mathbb{R}^{n-1})$ be given. Then, the space $\{c\in C^{k-1,1}_c(\overline{\mathbb{R}^n_\gamma}):\partial_{\mathbf{n}}c|_{\partial\mathbb{R}^n_\gamma}=0\}$ is a dense subset of $\{c\in H^k(\mathbb{R}^n_\gamma): \partial_{\mathbf{n}}c|_{\partial\mathbb{R}^n_\gamma}=0\}$. The proof is based on the following idea:
		\newline
		Since $\gamma\in C^{k}_b(\mathbb{R}^{n-1})$, there exists a $C^{k-1,1}$-diffeomorphism $F_\gamma : \mathbb{R}^n \rightarrow \mathbb{R}^n$ such that $F_\gamma(\mathbb{R}^n_+) = \mathbb{R}^n_\gamma$, $F_\gamma(x^\prime,0) = (x^\prime,\gamma(x^\prime))$ and $-\partial_{x_n}F_\gamma(x)|_{x_n=0} = \mathbf{n}(x^\prime,\gamma(x^\prime))$, where $\mathbf{n}$ denotes the exterior unit normal on $\partial\mathbb{R}^n_\gamma$, cf. \cite[Lemma 2.1]{Schumacher}. Let $c\in \{c\in H^k(\mathbb{R}^n_\gamma): \partial_{\mathbf{n}}c|_{\partial\mathbb{R}^n_\gamma}=0\}$. We define $\tilde{c} := c\circ F_\gamma \in H^k(\mathbb{R}^n_+)$. Thanks to the first part, there exists a sequence $(\tilde{c}_j)_{j\in\mathbb{N}}\subset \{c\in C^k_c(\overline{\mathbb{R}^n_+}): \partial_{\mathbf{n}}c|_{\partial\mathbb{R}^n_+}=0\}$ such that $\tilde{c}_j \rightarrow \tilde{c}$ in $H^k(\mathbb{R}^n_+)$ as $j\rightarrow\infty$. Finally, the sequence $c_j := \tilde{c}_j\circ F_\gamma^{-1} \in C^{k-1,1}_c(\overline{\mathbb{R}^n_\gamma})$, $j\in\mathbb{N}$, has the desired properties.
	\end{remdense}
	
	\section{Convergence of the Nonlocal to the Local Operator}\label{convop}
	In this section, we prove the strong $L^2-$convergence of the nonlocal operator $\mathcal{L}_\varepsilon$ to $-\Delta$. In the first case, we study the convergence on $\mathbb{R}^n$.
	\newtheorem{lemma1}[lemma0]{Lemma}
	\begin{lemma1}\label{lemma1}
		Let $c\in H^2(\mathbb{R}^n)$. Then, it holds
		\begin{align*}
			\Big\|\mathcal{L}_\varepsilon^{\mathbb{R}^n} c + \Delta c\Big\|_{L^2(\mathbb{R}^n)} \rightarrow 0\;\;\;\text{as }\varepsilon\searrow 0.
		\end{align*}
		In addition, if $c\in H^3(\mathbb{R}^n)$, we even have
		\begin{align*}
		\Big\|\mathcal{L}^{\mathbb{R}^n}_\varepsilon c + \Delta c\Big\|_{L^2(\mathbb{R}^n)} 
		\leq K\varepsilon\|c\|_{H^3(\mathbb{R}^n)}.
		\end{align*}
	\end{lemma1}
	\begin{proof}
		Thanks to Plancherel`s Theorem, it suffices to prove 
		\begin{align*}
			\Big\|\widehat{\mathcal{L}^{\mathbb{R}^n}_\varepsilon c} + \widehat{\Delta c}\Big\|_{L^2(\mathbb{R}^n)}\rightarrow 0\;\;\;\text{as }\varepsilon\searrow 0.
		\end{align*}
		By definition, we have
		\begin{align*}
			\Big\|\widehat{\mathcal{L}^{\mathbb{R}^n}_\varepsilon c} + \widehat{\Delta c}\Big\|_{L^2(\mathbb{R}^n)}^2 = \frac{1}{(2\pi)^n} \int_{\mathbb{R}^n}\big|\big(-\mathcal{F}(J_\varepsilon)(\xi) + \mathcal{F}(J_\varepsilon)(0) - |\xi|^2\big)\mathcal{F}(c)(\xi)\big|^2\text{d}\xi,
		\end{align*}
		where we used that 
		\begin{align*}
		(J_\varepsilon*1)(x) = \mathcal{F}(J_\varepsilon)(0)
		\end{align*}
		for all $x\in\mathbb{R}^n$. Next, we prove the pointwise convergence 
		\begin{align*}
		\mathcal{F}(J_\varepsilon)(0) - \mathcal{F}(J_\varepsilon)(\xi) \rightarrow |\xi|^2\;\;\;\text{for all }\xi\in\mathbb{R}^n
		\end{align*}
		as $\varepsilon\searrow0$. Defining the functions $f_\xi(x) := -e^{-ix\cdot\xi}$, we have
		\begin{align*}
		\mathcal{F}(J_\varepsilon)(0) - \mathcal{F}(J_\varepsilon)(\xi) &= \int_{\mathbb{R}^n}J_\varepsilon(|x|)(f_\xi(x)-f_\xi(0))\:\text{d}x \\
		&= \int_{\mathbb{R}^n}J_\varepsilon(|x|)\big(f_\xi(x)-f_\xi(0) - \frac{1}{2}x^TD^2f_\xi(0)x\big)\:\text{d}x \\
		&\;\;\;+ \int_{\mathbb{R}^n}J_\varepsilon(|x|)\frac{1}{2}x^TD^2f_\xi(0)x\:\text{d}x \\
		&=: I_\varepsilon^1 + I_\varepsilon^2.
		\end{align*}
		Now, we analyze these terms separately.
		
		Ad $I_\varepsilon^1$: Since the function $J_\varepsilon(|x|)x_i$ is odd for all $i=1,\ldots,n$, it holds
		\begin{align}\label{equ1}
		\int_{\mathbb{R}^n}J_\varepsilon(|x|)x_i\:\text{d}x = 0
		\end{align}
		for all $i=1,\ldots,n$. Therefore, multiplying \eqref{equ1} by $\partial_if_\xi(0)$ and summing over all $i=1,\ldots,n$, gives 
		\begin{align*}
		\big|I_\varepsilon^1\big| = \left|\int_{\mathbb{R}^n}J_\varepsilon(|x|)\big(f_\xi(x) - f_\xi(0) - \nabla f_\xi(0)\cdot x - \frac{1}{2}x^TD^2f_\xi(0)x\big)\:\text{d}x\right|.
		\end{align*}
		Using Taylor`s Theorem, we have $|f_\xi(x) - f_\xi(0) - \nabla f_\xi(0)\cdot x - \frac{1}{2}x^TD^2f_\xi(0)x| \leq \tilde{\varepsilon}C_{d,f}|x|^2$ for all $|x|<\delta$. Therefore, it holds
		\begin{align*}
		\big|I_\varepsilon^1\big| &\leq \int_{\mathbb{R}^n}J_\varepsilon(|x|)\Big|f_\xi(x) - f_\xi(0) - \nabla f_\xi(0)\cdot x - \frac{1}{2}x^TD^2f_\xi(0)x\Big|\:\text{d}x \\
		&\leq \int_{B_\delta(0)}\tilde{\varepsilon}C_{d,f}\rho_\varepsilon(|x|)\:\text{d}x + \int_{B_\delta(0)^c}C\rho_\varepsilon(|x|)\:\text{d}x.
		\end{align*}
		Since we can choose $\tilde{\varepsilon}>0$ arbitrarily small and since $\int_{B_\delta(0)}\rho_\varepsilon(|x|)\:\text{d}x\leq K$, the first term in the last line is arbitrarily small. In the second integral, the properties of $\rho_\varepsilon$ imply that this integral vanishes as $\varepsilon\searrow0$. Altogether, this shows $I_\varepsilon^1\rightarrow0$ as $\varepsilon\searrow0$.
		
		Ad $I_\varepsilon^2$: We compute
		\begin{align*}
		I_\varepsilon^2 &= \int_{\mathbb{R}^n}J_\varepsilon(|x|)\frac{1}{2}\sum_{l,m=1}^nx_l\partial_l\partial_mf_\xi(0)x_m\:\text{d}x\\
		&= \frac{1}{2}\sum_{l,m=1}^n\xi_l\xi_m\int_{\mathbb{R}^n}J_\varepsilon(|x|)x_lx_m\:\text{d}x \\
		&= \frac{1}{2}\sum_{m=1}^n\xi_m^2\int_{\mathbb{R}^n}J_\varepsilon(|x|)x_m^2\:\text{d}x \\
		&= \frac{1}{2}\sum_{m=1}^n\xi_m^2\frac{1}{n}\int_{\mathbb{R}^n}J_\varepsilon(|x|)\sum_{j=1}^nx_j^2\:\text{d}x
		= |\xi|^2.
		\end{align*}
		Here, we used the facts
		\begin{align*}
		\int_{\mathbb{R}^n}J_\varepsilon(|x|)x_lx_m\:\text{d}x = 0
		\end{align*}
		for all $m\neq l$, and 
		\begin{align*}
		\int_{\mathbb{R}^n}J_\varepsilon(|x|)x_m^2\:\text{d}x = \int_{\mathbb{R}^n}J_\varepsilon(|x|)x_1^2\:\text{d}x
		\end{align*}
		for all $m=1,\ldots,n$. In the last step, we used our assumptions on $\rho_\varepsilon$ to compute
		\begin{align*}
			\frac{1}{2n}\int_{\mathbb{R}^n}\rho_\varepsilon(|x|)\:\text{d}x = \frac{1}{2n}\:\omega_n\int_{0}^\infty\rho(r)r^{n-1}\:\text{d}r = \frac{\omega_n}{n}\frac{1}{C_n} = 1,
		\end{align*}
		where we used $\omega_n = \mathcal{H}^{n-1}(\mathbb{S}^{n-1})$ and that (for any $j=1,\ldots,n$) it holds
		\begin{align*}
			C_n = \int_{\mathbb{S}^{n-1}}|e_1\cdot\sigma|^2\:\text{d}\mathcal{H}^{n-1}(\sigma) = \int_{\mathbb{S}^{n-1}}|e_j\cdot\sigma|^2\:\text{d}\mathcal{H}^{n-1}(\sigma) = \frac{1}{n}\int_{\mathbb{S}^{n-1}}|\sigma|^2\:\text{d}\mathcal{H}^{n-1}(\sigma) = \frac{\omega_n}{n}.
		\end{align*}
		Altogether, this shows the pointwise convergence
		\begin{align*}
		\mathcal{F}(J_\varepsilon)(\xi) - \mathcal{F}(J_\varepsilon)(0) \rightarrow |\xi|^2\;\;\;\text{as }\varepsilon\searrow 0
		\end{align*}
		for all $\xi\in\mathbb{R}^n$. Again, using Taylor`s Theorem, we observe that 
		\begin{align}
		\Big|\widehat{J_\varepsilon}(0) - \widehat{J_\varepsilon}(\xi) - |\xi|^2\Big| &= \Big|\int_{\mathbb{R}^n}J_\varepsilon(|x|)\big(f_\xi(x) - f_\xi(0) - \nabla f_\xi(0)\cdot x - \frac{1}{2}x^TD^2f_\xi(0)x\big)\:\text{d}x\Big| \nonumber\\
		&\leq C(1+|\xi|^2)\int_{\mathbb{R}^n}\rho_\varepsilon(|x|)\:\text{d}x \leq C(1+|\xi|^2), \label{equ2}
		\end{align}
		which implies
		\begin{align*}
		\big|\big(\mathcal{F}(J_\varepsilon)(0) - \mathcal{F}(J_\varepsilon)(\xi)- |\xi|^2\big)\mathcal{F}(c)(\xi)\big|^2 \leq C\big(|\mathcal{F}(c)(\xi)|^2 + |\xi|^2|\mathcal{F}(c)(\xi)|^2\big)
		\end{align*}
		for all $\xi\in\mathbb{R}^n$. As $c\in H^2(\mathbb{R}^n)$, the right-hand side is integrable. Therefore, we use Lebesgue`s dominated convergence theorem to conclude the proof for $c\in H^2(\mathbb{R}^n)$.
		
		Now, let $c\in H^3(\mathbb{R}^n)$. Then, we even have
		\begin{align*}
		\Big|\widehat{J_\varepsilon}(0) - \widehat{J_\varepsilon}(\xi) - |\xi|^2\Big| &\leq C\int_{\mathbb{R}^n}J_\varepsilon(|x|)\sup_{y\in\mathbb{R}^n}|D^3f_\xi(y)||x|^3\:\text{d}x \\
		&\leq C|\xi|^3\int_{\mathbb{R}^n}\rho_\varepsilon(|x|)|x|\:\text{d}x \leq C\varepsilon|\xi|^3
		\end{align*}
		for all $\xi\in\mathbb{R}^n$ using a third-order Taylor expansion. Consequently, it holds
		\begin{align*}
		\Big\|\mathcal{L}^{\mathbb{R}^n}_\varepsilon c + \Delta c\Big\|_{L^2(\mathbb{R}^2)}^2 &= \frac{1}{(2\pi)^n}\int_{\mathbb{R}^n}|(\mathcal{F}(J_\varepsilon)(\xi) - \mathcal{F}(J_\varepsilon)(0) - |\xi|^2|)\mathcal{F}(c)(\xi)|^2\:\text{d}\mathbf{\xi} \\
		&\leq \frac{C\varepsilon^2}{(2\pi)^n}\int_{\mathbb{R}^n}\big||\xi|^3\mathcal{F}(c)(\xi)\big|^2\text{d}\mathbf{\xi} = C\varepsilon^2\|c\|_{H^3(\mathbb{R}^n)}^2.
		\end{align*}
	 	Therefore, we obtain
		\begin{align*}
		\Big\|\mathcal{L}^{\mathbb{R}^n}_\varepsilon c + \Delta c\Big\|_{L^2(\mathbb{R}^n)} \leq C\varepsilon\|c\|_{H^3(\mathbb{R}^n)},
		\end{align*}
		which concludes the proof.
	\end{proof}
	In the next lemma, we study the situation, where $x$ and $y$ have positive distance, i.e., no singularity appears. In fact, this will play an important role for the following proofs. 
	\newpage
	\newtheorem{lemma2}[lemma0]{Lemma}
	\begin{lemma2}\label{lemma2}
		Let $\Omega\subset\mathbb{R}^n$ be open and let $\Omega^\prime\subseteq\Omega$ such that $\textup{dist}(\overline{\Omega^\prime},\partial\Omega)>0$. Then, for every $c\in L^2(\Omega)$, it holds
		\begin{align*}
			\big\|\mathcal{R}_\varepsilon c\big\|_{L^2(\Omega^\prime)} \leq K\:\varepsilon\|c\|_{L^2(\Omega)}
		\end{align*}
		where $K>0$ only depends on $\textup{dist}(\Omega^c, \Omega^\prime)$. Here, we defined 
		\begin{align*}
		\mathcal{R}_\varepsilon c(x) := \int_{\Omega^c}J_\varepsilon(|x-y|)\big(c(x) - \tilde{c}(y)\big)\:\text{d}y
		\end{align*}
		for all $c\in L^2(\Omega)$ and almost all $x\in\Omega$. Here, $\tilde{c}\in L^2(\mathbb{R}^n)$ is an extension of $c$ to the whole of $\mathbb{R}^n$ such that $\|\tilde{c}\|_{L^2(\mathbb{R}^n)} \leq K\|c\|_{L^2(\Omega)}$.
	\end{lemma2}
	\begin{proof}
		First of all, we observe
		\begin{align*}
			\big\|\mathcal{R}_\varepsilon c\big\|_{L^2(\Omega^\prime)}^2 
			%&= \int_{\Omega^\prime}\left|\int_{\Omega^c}J_\varepsilon(|x-y|)\big(c(x) - \tilde{c}(y)\big)\:\text{d}y\right|^2\:\text{d}x \\
			&\leq K\int_{\Omega^\prime}\left|\int_{\Omega^c}J_\varepsilon(|x-y|)c(x)\:\text{d}y\right|^2\:\text{d}x + K\int_{\Omega^\prime}\left|\int_{\Omega^c}J_\varepsilon(|x-y|)\tilde{c}(y)\:\text{d}y\right|^2\:\text{d}x \\
			&=: K \big(I_\varepsilon^1 +  I_\varepsilon^2\big).
		\end{align*}
		Now, we estimate these terms separately.
		\newline
		Ad $I_\varepsilon^1$: Since $\textup{dist}(\overline{\Omega^\prime},\partial\Omega)>0$, it holds $|x-y|\geq \delta := \text{dist}(\Omega^c, \Omega^\prime)>0$ for all $x\in \Omega^c$, $y\in\Omega^\prime$. This yields 
		\begin{align*}
			I_\varepsilon^1 \leq K_\delta\int_{\Omega^\prime}|c(x)|^2\left(\int_{\Omega^c}\rho_\varepsilon(|x-y|)|x-y|\:\text{d}y\right)^2\:\text{d}x 
			\leq K_\delta\:\varepsilon^2\|c\|_{L^2(\Omega)}^2.
		\end{align*}
		Ad $I_\varepsilon^2$: Here, we estimate 
		\begin{align*}
			I_\varepsilon^2 \leq \int_{\Omega^\prime}\left(\int_{\Omega^c}J_\varepsilon(|x-y|)|\tilde{c}(y)|\:\text{d}y\right)^2\:\text{d}x &\leq \int_{\Omega^\prime}\left(\int_{\Omega^c}J_\varepsilon(|x-y|)\:\text{d}y\right)\left(\int_{\Omega^c}J_\varepsilon(|x-y|)|\tilde{c}(y)|^2\:\text{d}y\right)\:\text{d}x \\		
			&\leq K_\delta\:\varepsilon \int_{\Omega^c}|\tilde{c}(y)|^2\left(\int_{\Omega^\prime}\rho_\varepsilon(|x-y|)|x-y|\:\text{d}x\right)\:\text{d}y \\
			&\leq K_\delta\:\varepsilon^2 \|c\|_{L^2(\Omega)}^2,
		\end{align*}
		where we used $|x-y|\geq \delta := \text{dist}(\Omega^c, \Omega^\prime)>0$ and Fubini`s Theorem. Altogether, we obtain
		\begin{align*}
			\big\|\mathcal{R}_\varepsilon c\big\|_{L^2(\Omega^\prime)} \leq K_\delta\:\varepsilon\|c\|_{L^2(\Omega)}.
		\end{align*}
		This concludes the proof.
	\end{proof}
In the proof of Theorem \ref{theorem1}, we want to use a localization argument. To this end, we now consider the upper half-space $\mathbb{R}^n_+$.
\newtheorem{lemma3}[lemma0]{Lemma}
	\begin{lemma3}\label{lemma3}
		Let $c\in H^3(\mathbb{R}^n_+)$ with $\partial_\mathbf{n}c = 0$ on $\partial\mathbb{R}^n_+$. Then, it holds
		\begin{align*}
		\Big\|\mathcal{L}_\varepsilon^{\mathbb{R}^n_+} c + \Delta c\Big\|_{L^2(\mathbb{R}^n_+)} \leq K\sqrt{\varepsilon}\|c\|_{H^3(\mathbb{R}^n_+)}.
		\end{align*}
	\end{lemma3}
	\begin{proof}
		Denote $\tilde{c}\in H^3(\mathbb{R}^n)$ an extension of $c$ to the whole of $\mathbb{R}^n$ such that $\|\tilde{c}\|_{H^3(\mathbb{R}^n)}\leq K\|c\|_{H^3(\mathbb{R}^n_+)}$. Then, we observe
		\begin{align*}
		\Big\|\mathcal{L}^{\mathbb{R}^n_+}_\varepsilon c + \Delta c\Big\|_{L^2(\mathbb{R}^n_+)}
		&\leq \Big\|\mathcal{L}^{\mathbb{R}^n}_\varepsilon\tilde{c} + \Delta\tilde{c}\Big\|_{L^2(\mathbb{R}^n)} + \|\mathcal{R}_\varepsilon\tilde{c}\|_{L^2(\mathbb{R}^n_+)} \\ 
		&\leq K\varepsilon\|c\|_{H^3(\mathbb{R}^n_+)} + \|\mathcal{R}_\varepsilon\tilde{c}\|_{L^2(\mathbb{R}^n_+)},
		\end{align*}
		where the error term $\mathcal{R}_\varepsilon$ is given by
		\begin{align}\label{errorterm}
			\mathcal{R}_\varepsilon c(x) := \int_{\mathbb{R}^n_-}J_\varepsilon(|x-y|)(c(x)-\tilde{c}(y))\:\text{d}y
		\end{align}
		for a.e. $x\in\mathbb{R}^n_+$. 
	 	We want to prove that $\|\mathcal{R}_\varepsilon\tilde{c}\|_{L^2(\mathbb{R}^n_+)}\rightarrow 0$ as $\varepsilon\searrow 0$. To this end, we first prove the statement for functions $c\in C^3_0(\overline{\mathbb{R}^n_+})$ with $\partial_\mathbf{n}c = 0$ on $\partial\mathbb{R}^n_+$ and use a density argument afterwards to conclude the proof. 
		
		Using the transformation $y=(y_1,\ldots,y_{n-1},y_n) \mapsto (y_1,\ldots,y_{n-1},-y_n) =: \hat{y}$, we have
		\begin{align*}
		\mathcal{R}_\varepsilon c(x) = \int_{\mathbb{R}^n_-}J_\varepsilon(|x-y|)(c(x) - \tilde{c}(y))\text{d}y = \int_{\mathbb{R}^n_+}J_\varepsilon(|x-\hat{y}|)(c(x)- c(\hat{y}))\text{d}y
		\end{align*}
		for a.e. $x\in \mathbb{R}^n_+$. For $\delta>0$, we obtain
		\begin{align}\label{error}
		|\mathcal{R}_\varepsilon c(x)| &\leq \Big|\int_{B_{\delta}(x^\prime)\times(0,\delta)}J_\varepsilon(|x-\hat{y}|)(c(x)- c(\hat{y}))\text{d}y\Big| + \Big|\int_{\mathbb{R}^n_+\setminus \big(B_{\delta}(x^\prime)\times(0,\delta)\big)}J_\varepsilon(|x-\hat{y}|)(c(x)- c(\hat{y}))\text{d}y\Big|
		\end{align}
		for a.e. $x\in \mathbb{R}^n_+$. In the second integral on the right-hand side, we use the calculations in the proof of Lemma \ref{lemma2} to obtain
		\begin{align*}
			\int_{\mathbb{R}^n_+}\Big|\int_{\mathbb{R}^n_+\setminus
				\big(B_{\delta}(x^\prime)\times(0,\delta)\big)}J_\varepsilon(|x-\hat{y}|)(c(x)- c(\hat{y}))\text{d}y\Big|^2\text{d}x \leq K_\delta\:\varepsilon^2\|c\|_{H^3(\mathbb{R}^n_+)}^2, 
		\end{align*}
		since it holds $\text{dist}(x,\hat{y})\geq \delta>0$ for all $\hat{y}\in\mathbb{R}^n_+\setminus
		\big(B_{\delta}(x^\prime)\times(0,\delta)\big)$. Thus, it suffices to consider the first integral on the right-hand side. Here, a first order Taylor expansion yields
		\begin{align*}
		c(x) - c(\hat{y}) &= \nabla c(x)\cdot(x-\hat{y}) + R_2(x,\hat{y}) \\
		&= \nabla_{x^\prime}c(x)\cdot(x^\prime - y^\prime) + \partial_{x_n}c(x)(x_n+y_n)  + R_2(x,\hat{y}),
		\end{align*}
		where $x:=(x^\prime,x_n)\in \mathbb{R}^{n-1}\times\mathbb{R}_+$ and  
		\begin{align*}
			R_2(x,\hat{y}) := \sum_{|\beta|=2}\frac{2}{\beta!}\left(\int_{0}^1(1-t)\text{D}^\beta c(\hat{y}+t(x-\hat{y}))\text{d}t\right)(x-\hat{y})^\beta.
		\end{align*}
		Inserting this, we end up with
		\begin{align}
			\Big|\int_{B_{\delta}(x^\prime)\times(0,\delta)}J_\varepsilon(|x-\hat{y}|)(c(x)- c(\hat{y}))\text{d}y\Big| &\leq \Big|\int_{B_{\delta}(x^\prime)\times(0,\delta)}J_\varepsilon(|x-\hat{y}|)\nabla_{x^\prime}c(x)\cdot(x^\prime - y^\prime)\;\text{d}y\Big| \nonumber\\
			&\;\;\;+\Big|\int_{B_{\delta}(x^\prime)\times(0,\delta)}J_\varepsilon(|x-\hat{y}|)\partial_{x_n}c(x)(x_n+y_n)\;\text{d}y\Big| \nonumber\\
			&\;\;\;+\Big|\int_{B_{\delta}(x^\prime)\times(0,\delta)}J_\varepsilon(|x-\hat{y}|)R_2(x,\hat{y})\;\text{d}y\Big| \nonumber\\
			&=: I_{\varepsilon,\delta}^1 + I_{\varepsilon,\delta}^2 + I_{\varepsilon,\delta}^3. \label{int1-3}
		\end{align}
		Now, we estimate these integrals separately. 
		
		Ad $I_{\varepsilon,\delta}^1$: We observe that the integrand $J_\varepsilon(|x-\hat{y}|)\nabla_{x^\prime}c(x)\cdot(x^\prime - y^\prime)$ is odd with respect to $x^\prime - y^\prime$. Therefore, it holds 
		\begin{align*}
		\int_{B_{\delta}(x^\prime)}J_\varepsilon(|x-\hat{y}|)\nabla_{x^\prime}c(x)\cdot(x^\prime - y^\prime)\text{d}y^\prime = 0,
		\end{align*}
		which then implies $I_{\varepsilon,\delta}^1 = 0$.
		
		Ad $I_{\varepsilon,\delta}^2$: First of all, the properties of $c$ and the fundamental theorem of calculus imply
		\begin{align*}
		\partial_{x_n}c(x^\prime,x_n) = \partial_{x_n}c(x^\prime,x_n) - \partial_{x_n}c(x^\prime,0) = \int_0^1\partial_{x_n}^2c(x^\prime,tx_n)x_n\:\text{d}t.
		\end{align*}
		This yields
		\begin{align*}
		&\;\;\;\;\:\Big|\int_{B_{\delta}(x^\prime)\times(0,\delta)}J_\varepsilon(|x-\hat{y}|)\partial_{x_n}c(x)(x_n+y_n)\text{d}y\Big| \\
		&= \Big|\int_{B_{\delta}(x^\prime)\times(0,\delta)}\int_0^1 J_\varepsilon(|x-\hat{y}|)\partial_{x_n}^2c(x^\prime,tx_n)x_n(x_n+y_n)\:\text{d}t\:\text{d}y\Big| \\
		&\leq \int_0^1\big|\partial_{x_n}^2c(x^\prime,tx_n)\big|\Big(\int_{B_{\delta}(x^\prime)\times(0,\delta)}J_\varepsilon(|x-\hat{y}|)(|x_n|+|y_n|)|x_n+y_n|\text{d}y\Big)\text{d}t \\ 
		&\leq 
		\int_0^1\big|\partial_{x_n}^2c(x^\prime,tx_n)\big|a_\varepsilon(x_n)\:\text{d}t, 
		\end{align*}
		where we defined 
		\begin{align*}
		a_\varepsilon(x_n) := \int_{B_{\delta}(x^\prime)\times(0,\delta)}\rho_\varepsilon(|x-\hat{y}|)\text{d}y.
		\end{align*}
		By construction and due to the properties of $\rho_\varepsilon$, the function $a_\varepsilon$ is in fact independent of $x^\prime$. Computing the $L^2$-norm of $a_\varepsilon$, we get
		\begin{align}\label{a_eps}
		\|a_\varepsilon\|_{L^2(\mathbb{R}_+)}^2 &= \int_{0}^\infty\left(\int_{B_{\delta}(x^\prime)\times(0,\delta)}\rho_\varepsilon(|x-\hat{y}|)\:\text{d}y\right)^2\text{d}x_n \nonumber\\
		&= \int_{0}^\infty\left(\int_{B_{\delta}(x^\prime)\times(0,\delta)}\varepsilon^{-n}\rho\Big(\Big|\frac{x-\hat{y}}{\varepsilon}\Big|\Big)\:\text{d}y\right)^2\text{d}x_n \nonumber\\ 
		&\leq \varepsilon\int_{0}^\infty\left(\int_{0}^\infty\int_{\mathbb{R}^{n-1}}\rho(|x-\hat{y}|)\:\text{d}y^\prime\text{d}y_n\right)^2\text{d}x_n \nonumber\\
		&\leq \varepsilon\int_{0}^R\left(\int_{0}^\infty\int_{\mathbb{R}^{n-1}}\rho(|x-\hat{y}|)\:\text{d}y^\prime\text{d}y_n\right)^2\text{d}x_n \leq R\varepsilon.
		\end{align}
		Here, we applied the transformations $y \mapsto \varepsilon y$ and $x_n \mapsto \varepsilon x_n$ and we used that $\text{supp}\rho\subset B_R(0)$. This then implies
		\begin{align*}
		&\int_{\mathbb{R}^n_+}\Big|\int_{B_{\delta}(x^\prime)\times(0,\delta)}J_\varepsilon(|x-\hat{y}|)\partial_{x_n}c(x)(x_n+y_n)\text{d}y\Big|^2\:\text{d}x \\
		&\leq 
		\int_{\mathbb{R}^n_+}|a_\varepsilon(x_n)|^2\left(\frac{1}{x_n}\int_0^{x_n}\|\partial_{x_n}^2c(x^\prime,.)\|_{L^\infty(\mathbb{R}_+)}\text{d}z_n\right)^2\text{d}x \\
		&\leq K\int_{\mathbb{R}^{n-1}}\int_{0}^\infty|a_\varepsilon(x_n)|^2\|\partial_{x_n}^2c(x^\prime,.)\|_{L^\infty(\mathbb{R}_+)}^2\text{d}x_n\text{d}x^\prime \\
		&\leq K\int_{\mathbb{R}^{n-1}}\|a_\varepsilon\|_{L^2(\mathbb{R}_+)}^2\|\partial_{x_n}^2c(x^\prime,.)\|_{H^1(\mathbb{R}_+)}^2\text{d}x^\prime 
		\leq K\varepsilon \|c\|_{H^3(\mathbb{R}^n_+)}^2,
		\end{align*}
		where we used the embedding $H^1(\mathbb{R}_+) \hookrightarrow L^\infty(\mathbb{R}_+)$.
		
		Ad $I_{\varepsilon,\delta}^3$: Here, it holds
		\begin{align}
		&\int_{\mathbb{R}^n_+}\Big|\int_{B_{\delta}(x^\prime)\times(0,\delta)}J_\varepsilon(|x-\hat{y}|)R_2(x,\hat{y})\text{d}y\Big|^2\text{d}x \nonumber\\
		&\leq K\int_{\mathbb{R}^n_+}\left(\int_{B_{\delta}(x^\prime)\times(0,\delta)}\int_{0}^1\rho_\varepsilon(|x-\hat{y}|)|D^2c(\hat{y}+t(x-\hat{y}))|\:\text{d}t\text{d}y\right)^2\text{d}x \nonumber \\
		&\leq K\int_{\mathbb{R}^n_+}\left(\int_{B_{\delta}(x^\prime)\times(0,\delta)}\rho_\varepsilon(|x-\hat{y}|)\:\text{d}y\right)\left(\int_{0}^1\int_{B_{\delta}(x^\prime)\times(0,\delta)}\rho_\varepsilon(|x-\hat{y}|)|D^2c(\hat{y}+t(x-\hat{y}))|^2\:\text{d}y\text{d}t\right)\:\text{d}x \nonumber\\
		&\leq K\int_{\mathbb{R}^n_+}a_\varepsilon(x_n)\left(\int_{0}^1\int_{\mathbb{R}^n_+}\rho_\varepsilon(|x-\hat{y}|)\|D^2c(\hat{y}^\prime+t(x^\prime-\hat{y}^\prime),.)\|_{L^\infty(\mathbb{R}_+)}^2\:\text{d}y\text{d}t\right)\:\text{d}x \nonumber\\
		&\leq K\int_{0}^R a_\varepsilon(x_n)\text{d}x_n\int_{0}^1\int_{\mathbb{R}^{n-1}}\int_{\mathbb{R}^{n-1}}\|\rho_\varepsilon(|x^\prime-\hat{y}^\prime,.|)\|_{L^1(\mathbb{R}_+)}\|D^2c(\hat{y}^\prime+t(x^\prime-\hat{y}^\prime),.)\|_{L^\infty(\mathbb{R}_+)}^2\:\text{d}y\text{d}x^\prime\text{d}t \nonumber\\
		&\leq K\left(\int_{0}^R a_\varepsilon(x_n)\:\text{d}x_n\right)\|c\|^2_{H^3(\mathbb{R}^n_+)} \leq K\varepsilon\|c\|^2_{H^3(\mathbb{R}^n_+)} \label{helpinequ1},
		\end{align}
		where we first used the inequality of Cauchy-Schwarz and then Lemma \ref{lemTraf} together with the embedding $H^1(\mathbb{R}_+) \hookrightarrow L^\infty(\mathbb{R}_+)$ in the fourth step. In the last step, we used $\text{supp}\rho\subset B_R(0)$ and applied the transformations $y \mapsto \varepsilon y$ and $x_n \mapsto \varepsilon x_n$.
		Altogether, this shows 
		\begin{align*}
		\|\mathcal{R}_\varepsilon c\|_{L^2(\mathbb{R}^n_+)} \leq K\sqrt{\varepsilon}\|c\|_{H^3(\mathbb{R}^n_+)}
		\end{align*}
		for all $c\in C^3_0(\overline{\mathbb{R}^n_+})$ with $\partial_\mathbf{n}c = 0$ on $\partial\mathbb{R}^n_+$. 
		
		In the next step, we want to use a denseness argument, in order conclude the proof. Let \linebreak $c\in H^3(\mathbb{R}^n_+)$ with $\partial_\mathbf{n}c = 0$ on $\partial\mathbb{R}^n_+$ be arbitrary. Since the space $\{c\in C^\infty_0(\overline{\mathbb{R}^n_+}): \partial_\mathbf{n}c = 0\text{ on }\partial\mathbb{R}^n_+\}$ is dense in $\{c\in H^3(\mathbb{R}^n_+): \partial_\mathbf{n}c = 0\text{ on }\partial\mathbb{R}^n_+\}$, cf. Remark \ref{remdense}, there exists a sequence $(c_k)_{k\in\mathbb{N}}\subset C^\infty_0(\overline{\mathbb{R}^n_+})$ with $\partial_\mathbf{n}c = 0$ on $\partial\mathbb{R}^n_+$ such that $c_k \rightarrow c$ in $H^3(\mathbb{R}^n_+)$. Thanks to our results so far, the sequence 
		\begin{align*}
		\Big(\mathcal{L}_\varepsilon^{\mathbb{R}^n_+} c_k +\Delta c_k\Big)_{k\in\mathbb{N}} \subset L^2(\mathbb{R}^n_+)
		\end{align*}
		is bounded. Thus, there exists a subsequence, which is again denoted by $\Big(\mathcal{L}_\varepsilon^{\mathbb{R}^n_+} c_k +\Delta c_k\Big)_{k\in\mathbb{N}}$, such that 
		\begin{align*}
		\mathcal{L}_\varepsilon^{\mathbb{R}^n_+} c_k +\Delta c_k \rightharpoonup w\;\;\;\text{in }L^2(\mathbb{R}^n_+)\;\;\;\text{as }k\rightarrow\infty
		\end{align*}
		for some $w\in L^2(\mathbb{R}^n_+)$. Since $\mathcal{L}_\varepsilon^{\mathbb{R}^n_+}$ is linear and continuous, and therefore weakly continuous, it follows that $w = \mathcal{L}_\varepsilon^{\mathbb{R}^n_+} c + \Delta c$. Then, the weak lower semi-continuity of norms implies
		\begin{align*}
		\Big\|\mathcal{L}_\varepsilon^{\mathbb{R}^n_+} c + \Delta c\Big\|_{L^2(\mathbb{R}^n_+)} \leq \liminf_{k\rightarrow\infty}\Big\|\mathcal{L}_\varepsilon^{\mathbb{R}^n_+} c_k + \Delta c_k\Big\|_{L^2(\mathbb{R}^n_+)} \leq \liminf_{k\rightarrow\infty}K\sqrt{\varepsilon}\|c_k\|_{H^2(\mathbb{R}^n_+)} \leq K\sqrt{\varepsilon}
		\end{align*}
		for all $c\in H^3(\mathbb{R}^n_+)$ with $\partial_\mathbf{n}c = 0$ on $\partial\mathbb{R}^n_+$ with $\|c\|_{H^3(\mathbb{R}^n_+)} \leq 1$. In particular, this concludes the proof.
	\end{proof}
%\newpage
\newtheorem{rem0}[lemma0]{Remark}
\begin{rem0}
	The rate of convergence obtained in Lemma \ref{lemma3} is optimal. Even in the simplest case, where $n=1$ and $c\in C^\infty_0(\overline{\mathbb{R}_+})$, we do not gain a better rate of convergence in $L^2(\mathbb{R}_+)$ unless $\partial_\mathbf{n}^2c=0$ on $\partial\mathbb{R}_+$. This shows the following calculation:
	Let $x>0$. Then, we obtain for the error term  
	\begin{align*}
	\mathcal{R}_\varepsilon c(x) &= \int_{0}^\infty J_\varepsilon(|x-\hat{y}|)(c(x)-c(\hat{y}))\:\textup{d}y \\
	&= \int_{B_{\delta}(x)\cap\mathbb{R}_+}J_\varepsilon(|x-\hat{y}|)(c(x)-c(\hat{y}))\:\textup{d}y + \int_{\mathbb{R}_+\setminus{B_{\delta}(x)}}J_\varepsilon(|x-\hat{y}|)(c(x)-c(\hat{y}))\:\textup{d}y \\
	&= \int_{B_{\delta}(x)\cap\mathbb{R}_+} J_\varepsilon(|x-\hat{y}|)\Big(\frac{1}{2}c^{\prime\prime}(0)(x-\hat{y})^2+R_3(x,\hat{y})\Big)\textup{d}y + \int_{\mathbb{R}_+\setminus{B_{\delta}(x)}}J_\varepsilon(|x-\hat{y}|)(c(x)-c(\hat{y}))\:\textup{d}y \\
	&= \frac{1}{2}c^{\prime\prime}(0)\int_{B_{\delta}(x)\cap\mathbb{R}_+}\rho_\varepsilon(|x-\hat{y}|)\textup{d}y + \mathcal{O}(\varepsilon),
	\end{align*}
	where $\int_{B_{\delta}(x)\cap\mathbb{R}_+}\rho_\varepsilon(|x-\hat{y}|)\textup{d}y =: a_\varepsilon(x)$ and $\|a_\varepsilon(x)\|_{L^2(\mathbb{R}_+)} \geq K\sqrt{\varepsilon}$ for $\varepsilon>0$ small enough similar as in \eqref{a_eps}.
	In the third step, we used a Taylor expansion. In the last step, we then applied Lemma \ref{lemma2}. Here, the term $\mathcal{O}(\varepsilon)$ is measured with respect to the $L^2(\mathbb{R}_+)$-norm.
\end{rem0}
	\newtheorem{cor3}[lemma0]{Corollary}
	\begin{cor3}\label{cor3}
		Let $c\in H^2(\mathbb{R}^n_+)$ with $\partial_\mathbf{n}c = 0$ on $\partial\mathbb{R}^n_+$. Then, it holds
		\begin{align*}
			\Big\|\mathcal{L}_\varepsilon^{\mathbb{R}^n_+} c + \Delta c\Big\|_{L^2(\mathbb{R}^n_+)} \rightarrow 0\;\;\;\text{as }\varepsilon\searrow 0.
		\end{align*}
	\end{cor3}
	\begin{proof}
		Thanks to Lemma \ref{lemma3}, it suffices to prove that $\big\|\mathcal{L}_\varepsilon^{\mathbb{R}^n_+}c + \Delta c\big\|_{L^2(\mathbb{R}^n_+)}$ is bounded uniformly in $\varepsilon>0$ for all $c\in H^2(\mathbb{R}^n_+)$ with $\partial_\mathbf{n}c = 0$ on $\partial\mathbb{R}^n_+$. Then, the Banach-Steinhaus Theorem concludes the proof.
		\newline
		In fact, we only need to estimate the error term \eqref{error} in a suitable way. First of all, we prove the assertion for functions $c\in C^2_0(\overline{\mathbb{R}^n_+})$ with $\partial_\mathbf{n}c = 0$ on $\partial\mathbb{R}^n_+$ and use a density argument afterwards to conclude the proof.
		In the term 
		$$\Big|\int_{\mathbb{R}^n_+\setminus \big(B_{\delta}(x^\prime)\times(0,\delta)\big)}J_\varepsilon(|x-\hat{y}|)(c(x)- c(\hat{y}))\text{d}y\Big|,$$ 
		we can apply Lemma \ref{lemma2}, again.  
		Thus, it suffices to consider the first part of \eqref{error}.
		As in the proof of Lemma \ref{lemma3}, it holds $I^1_{\varepsilon,\delta} = 0$. In $I^2_{\varepsilon,\delta}$, we use the properties of $c$ and the fundamental theorem of calculus to obtain
		\begin{align*}
		&\;\;\;\;\:\Big|\int_{B_{\delta}(x^\prime)\times(0,\delta)}J_\varepsilon(|x-\hat{y}|)\partial_{x_n}c(x)(x_n+y_n)\text{d}y\Big| \\
		&= \Big|\int_{B_{\delta}(x^\prime)\times(0,\delta)}\int_0^1 J_\varepsilon(|x-\hat{y}|)\partial_{x_n}^2c(x^\prime,tx_n)x_n(x_n+y_n)\:\text{d}t\text{d}y\Big| \\
		&\leq \int_{B_{\delta}(x^\prime)\times(0,\delta)}\int_0^1 J_\varepsilon(|x-\hat{y}|)|\partial_{x_n}^2c(x^\prime,tx_n)||x_n||x_n+y_n|\:\text{d}t\text{d}y \\
		&\leq \int_0^1|\partial_{x_n}^2c(x^\prime,tx_n)|\Big(\int_{B_{\delta}(x^\prime)\times(0,\delta)}J_\varepsilon(|x-\hat{y}|)(|x_n|+|y_n|)|x_n+y_n|\text{d}y\Big)\text{d}t. 
		\end{align*}
		In the inner integral, it holds
		\begin{align*}
		\int_{B_{\delta}(x^\prime)\times(0,\delta)}J_\varepsilon(|x-\hat{y}|)(|x_n|+|y_n|)|x_n+y_n|\text{d}y &\leq \int_{B_{\delta}(x^\prime)\times(0,\delta)}J_\varepsilon(|x-\hat{y}|)K(|x_n|^2+|y_n|^2)\text{d}y \\
		&\leq \int_{B_{\delta}(x^\prime)\times(0,\delta)}K\rho_\varepsilon(|x-\hat{y}|)\text{d}y \leq K.
		\end{align*}
		This implies
		\begin{align*}
		\Big|\int_{B_{\delta}(x^\prime)\times(0,\delta)}J_\varepsilon(|x-\hat{y}|)\partial_{x_n}c(x)(x_n+y_n)\text{d}y\Big| &\leq \int_0^1 K|\partial_{x_n}^2c(x^\prime,tx_n)|\text{d}t \\ 
		&= K\frac{1}{x_n}\int_0^{x_n}|\partial_{x_n}^2c(x^\prime,z_n)|\text{d}z_n,
		\end{align*}
		where we changed variables as $z_n = tx_n$ in the last step. This implies
		\begin{align}
			\int_{\mathbb{R}^n_+}\Big|\int_{B_{\delta}(x^\prime)\times(0,\delta)}J_\varepsilon(|x-\hat{y}|)\partial_{x_n}c(x)(x_n+y_n)\text{d}y\Big|^2\text{d}x &\leq K\int_{\mathbb{R}^n_+}\Big(\frac{1}{x_n}\int_0^{x_n}|\partial_{x_n}^2c(x^\prime,z_n)|\text{d}z_n\Big)^2\text{d}x \nonumber\\
			%&= \int_{\mathbb{R}^{n-1}}\int_{0}^\infty\Big(\frac{1}{x_n}\int_0^{x_n}|\partial_{x_n}^2c(x^\prime,z_n)|\text{d}z_n\Big)^2\text{d}x_n\text{d}x^\prime \\
			&\leq K\int_{\mathbb{R}^{n-1}}\int_{0}^\infty|\partial_{x_n}^2c(x^\prime,x_n)|^2\text{d}x_n\text{d}x^\prime \nonumber\\
			&\leq K\|c\|_{H^2(\mathbb{R}^n_+)}^2, \label{auxest2}
		\end{align}
		where we applied Hardy`s inequality.
 	
 	Ad $I_{\varepsilon,\delta}^3$: Here, we use Fubini`s Theorem and Lemma \ref{lemTraf} to get
 	\begin{align}
 	&\int_{\mathbb{R}^n_+}\Big|\int_{B_{\delta}(x^\prime)\times(0,\delta)}J_\varepsilon(|x-\hat{y}|)R_2(x,\hat{y})\text{d}y\Big|^2\text{d}x \nonumber\\
 	&\leq K\int_{\mathbb{R}^n_+}\int_{B_{\delta}(x^\prime)\times(0,\delta)}\int_{0}^1\rho_\varepsilon(|x-\hat{y}|)|D^2c(\hat{y}+t(x-\hat{y}))|^2\text{d}t\text{d}y\text{d}x \nonumber\\
 	&\leq K\int_{0}^1\int_{\mathbb{R}^{n-1}}\int_{\mathbb{R}^{n-1}}\|\rho_\varepsilon(|(x^\prime-y^\prime,.)|)\|_{L^1(\mathbb{R})}\|D^2c((1-t)y^\prime+tx^\prime,.)\|_{L^2(\mathbb{R}_+)}^2\text{d}y^\prime\text{d}x^\prime\text{d}t \nonumber\\
 	&\leq K\|c\|_{H^2(\mathbb{R}^n_+)}^2. \label{auxest1}
 	\end{align}
 	Altogether, we get the following estimate
 	\begin{align*}
 	\|\mathcal{R}_\varepsilon\tilde{c}\|_{L^2(\mathbb{R}^n_+)} \leq K_\delta\|c\|_{H^2(\mathbb{R}^n_+)} + K\|c\|_{H^2(\mathbb{R}^n_+)}
 	\end{align*}
 	for all $c\in C^2_0(\overline{\mathbb{R}^n_+})$ with $\partial_{x_n}c_{\big|\{x_n=0\}} = 0$. %Here, $\tilde{c}\in C^2_0(\mathbb{R}^n)$ denotes an extension of $c$ to the whole of $\mathbb{R}^n$, cf. the proof of Lemma \ref{lemma3}.
 	Finally, a density argument as in the proof of Lemma \ref{lemma3} finishes the proof.
	\end{proof}
	In the next step, we prove convergence on the bent half-space.
	\newtheorem{lemma4}[lemma0]{Lemma}
	\begin{lemma4}\label{lemma4}
		Let $\gamma\in C^3_b(\mathbb{R}^{n-1})$ with $\|\gamma\|_{C^3_b(\mathbb{R}^{n-1})}$ sufficiently small and $c\in H^3(\mathbb{R}^n_\gamma)$ with $\partial_\mathbf{n}c = 0$ on $\partial\mathbb{R}^n_\gamma$. Then, it holds
		\begin{align*}
		\Big\|\mathcal{L}_\varepsilon^{\mathbb{R}^n_\gamma} c + \Delta c\Big\|_{L^2(\mathbb{R}^n_\gamma)} \leq K\sqrt{\varepsilon}\|c\|_{H^3(\mathbb{R}^n_\gamma)}.
		\end{align*}
	\end{lemma4}
	\begin{proof}
		Let $\tilde{c}\in H^3(\mathbb{R}^n)$ denote an extension of $c$ to the whole of $\mathbb{R}^n$ such that $\|\tilde{c}\|_{H^3(\mathbb{R}^n)} \leq K\|c\|_{H^3(\mathbb{R}^n_\gamma)}$. Then, we have
		\begin{align*}
			\Big\|\mathcal{L}_\varepsilon^{\mathbb{R}^n_\gamma}c+\Delta c\Big\|_{L^2(\mathbb{R}^n_\gamma)} &\leq \Big\|\mathcal{L}_\varepsilon^{\mathbb{R}^n}\tilde{c}+\Delta \tilde{c}\Big\|_{L^2(\mathbb{R}^n)} + \|\mathcal{R}_\varepsilon\tilde{c}\|_{L^2(\mathbb{R}^n_\gamma)} \\
			&\leq K\varepsilon\|c\|_{H^3(\mathbb{R}^n_\gamma)} + \|\mathcal{R}_\varepsilon\tilde{c}\|_{L^2(\mathbb{R}^n_\gamma)},
		\end{align*}
		where we used Lemma \ref{lemma3} and defined the error term
		\begin{align}\label{error1}
			\mathcal{R}_\varepsilon\tilde{c}(x) := \int_{(\mathbb{R}^n_\gamma)^c}J_\varepsilon(|x-y|)\big(c(x) - \tilde{c}(y)\big)\;\text{d}y
		\end{align}
		for a.e. $x\in\mathbb{R}^n_\gamma$. Thus, it suffices to consider the error term. 
		\newline
		Since $\gamma\in C^3_b(\mathbb{R}^{n-1})$, there exists a $C^{2,1}-$diffeomorphism $F_\gamma : \mathbb{R}^n \rightarrow \mathbb{R}^n$ satisfying $F_\gamma(\mathbb{R}^n_+) = \mathbb{R}^n_\gamma$, $F_\gamma(x^\prime,0) = (x^\prime,\gamma(x^\prime))$ and $-\partial_{x_n}F_\gamma(x)|_{x_n=0} = \mathbf{n}(x^\prime,\gamma(x^\prime))$, where $\mathbf{n}$ denotes the exterior unit normal on $\partial\mathbb{R}^n_\gamma$, cf. \cite[Lemma 2.1]{Schumacher}. 
		
		In the following, we assume
		\begin{align}\label{ass1}
		\sup_{\hat{x}\in\mathbb{R}^n}|\text{D}F_\gamma(\hat{x}) - \text{Id}| \leq \alpha
		\end{align}
		for some $\alpha\in(0,\frac{1}{3})$. Using the diffeomorphism $F_\gamma$ and reflection afterwards, we compute  
		\begin{align*}
			\mathcal{R}_\varepsilon \tilde{c}(x)	
			&= \int_{\mathbb{R}^n_-}J_\varepsilon(|F_\gamma(\hat{x})-F_\gamma(\hat{y})|)\big(c(F_\gamma(\hat{x})) - \tilde{c}(F_\gamma(\hat{y}))\big)\big|\det(\text{D}F_\gamma(\hat{y}))\big|\:\text{d}y \\ 
			&= \int_{\mathbb{R}^n_+}J_\varepsilon(|A_\gamma(\hat{x},\bar{y})(\hat{x}-\bar{y})|)\big(u(\hat{x})-u(\bar{y})\big)\big|\det(\text{D}F_\gamma(\bar{y}))\big|\:\text{d}y
		\end{align*}
		for almost all $x\in\mathbb{R}^n_\gamma$. Here, we defined $u:= c\circ F_\gamma:\mathbb{R}^n_+ \rightarrow \mathbb{R}$, $F_\gamma(\hat{x}) = x$, $\bar{y} = (y_1,\ldots,y_{n-1},-y_n)$ and we used 
		\begin{align*}
		F_\gamma(\hat{x}) - F_\gamma(\bar{y}) = \int_{0}^1\text{D}F_\gamma(\bar{y} + t(\hat{x}-\bar{y}))(\hat{x}-\bar{y})\:\text{d}t =: A_\gamma(\hat{x},\bar{y})(\hat{x}-\bar{y}).
		\end{align*}
		Next, we rewrite the error term $\mathcal{R}_\varepsilon$ as 
		\begin{align}
		\mathcal{R}_\varepsilon c(x) &= \int_{B_{\delta}(\hat{x}^\prime)\times(0,\delta)}J_\varepsilon(|A_\gamma(\hat{x},\hat{x})(\hat{x}-\bar{y})|)\big(u(\hat{x})-u(\bar{y})\big)\big|\det\text{D}F_\gamma(\hat{x})\big|\:\text{d}y \nonumber\\
		&+ \int_{B_{\delta}(\hat{x}^\prime)\times(0,\delta)}K_\varepsilon(|A_\gamma(\hat{x},\hat{x})(\hat{x}-\bar{y})|)\big(u(\hat{x})-u(\bar{y})\big)\:\text{d}y\nonumber\\
		&+\int_{\mathbb{R}^n_+\setminus\big(B_{\delta}(\hat{x}^\prime)\times(0,\delta)\big)}J_\varepsilon(|A_\gamma(\hat{x},\bar{y})(\hat{x}-\bar{y})|)\big(u(\hat{x})-u(\bar{y})\big)\big|\det(\text{D}F_\gamma(\bar{y}))\big|\:\text{d}y \nonumber\\
		&=: I_\varepsilon^1(x) + I_\varepsilon^2(x) + I_\varepsilon^3(x)\label{error2},
		\end{align}
		where $K_\varepsilon(|A_\gamma(\hat{x},\hat{x})(\hat{x}-\bar{y})|)$ is defined by
		\begin{align*}
		K_\varepsilon(|A_\gamma(\hat{x},\hat{x})(\hat{x}-\bar{y})|) := J_\varepsilon(|A_\gamma(\hat{x},\hat{y})(\hat{x}-\bar{y})|)\big|\det\text{D}F_\gamma(\bar{y})\big| - J_\varepsilon(|A_\gamma(\hat{x},\hat{x})(\hat{x}-\bar{y})|)\big|\det\text{D}F_\gamma(\hat{x})\big|
		\end{align*}
		for almost all $\hat{x},\bar{y}\in\mathbb{R}^n_+$.
		In $I_\varepsilon^3(x)$, we can apply Lemma \ref{lemma2}, since it holds 
		\begin{align*}
		|A_\gamma(\hat{x},\bar{y})(\hat{x}-\bar{y})| \geq K|\hat{x}-\bar{y}| \geq K\delta.	
		\end{align*} 
		In the next step, we estimate the terms $I_\varepsilon^1(x)$ and $ I_\varepsilon^2(x)$ separately. In the following, let $c\in C^{2,1}_0(\overline{\mathbb{R}^n_\gamma})$ with $\partial_\mathbf{n}c = 0$ on $\partial\mathbb{R}^n_\gamma$.
		
		Ad $I_\varepsilon^1(x)$: First, we use a Taylor expansion for $u$ to get
		\begin{align*}
		I_\varepsilon^1(x) &= \int_{B_{\delta}(\hat{x}^\prime)\times(0,\delta)}J_\varepsilon(|A_\gamma(\hat{x},\hat{x})(\hat{x}-\bar{y})|)\nabla_{x^\prime}u(\hat{x})\cdot(\hat{x}^\prime-\bar{y}^\prime)\big|\det \text{D}F_\gamma(\hat{x})\big|\;\text{d}y \\
		&+ \int_{B_{\delta}(\hat{x}^\prime)\times(0,\delta)}J_\varepsilon(|A_\gamma(\hat{x},\hat{x})(\hat{x}-\bar{y})|)\partial_{x_n}u(\hat{x})(\hat{x}_n+\bar{y}_n)\big|\det \text{D}F_\gamma(\hat{x})\big|\;\text{d}y \\ 
		&+ \int_{B_{\delta}(\hat{x}^\prime)\times(0,\delta)}J_\varepsilon(|A_\gamma(\hat{x},\hat{x})(\hat{x}-\bar{y})|)R_2(\hat{x},\bar{y})\big|\det \text{D}F_\gamma(\hat{x})\big|\;\text{d}y,
		\end{align*}
		where the error term $R_2(\hat{x},\bar{y})$ is defined by
		\begin{align*}
			R_2(\hat{x},\bar{y}) := \sum_{|\beta|=2}\frac{2}{\beta!}\left(\int_{0}^1(1-t)\text{D}^\beta c(\bar{y}+t(\hat{x}-\bar{y}))\text{d}t\right)(\hat{x}-\bar{y})^\beta.
		\end{align*}
		Observe that by our choice of $F_\gamma$, the term $A_\gamma(\hat{x},\hat{x})$ is given by
		\begin{align*}
		A_\gamma(\hat{x},\hat{x}) = \text{D}F_\gamma(\hat{x}) = U(\hat{x})\left(\begin{array}{ccc|c}
		& & & 0 \\
		& A^\prime(\hat{x}) & & \vdots \\
		& & & 0 \\ \hline
		0 & \ldots & 0 & 1
		\end{array}
		\right),
		\end{align*}
		where $U(\hat{x})\in \text{SO}(n)$ and $A^\prime(\hat{x})\in\mathbb{R}^{(n-1)\times(n-1)}$, cf. \cite{AbelsTerasawa} for details. This implies
		\begin{align*}
		|A_\gamma(\hat{x},\hat{x})(\hat{x}-\bar{y})|^2 = |A^\prime(\hat{x})(\hat{x}^\prime-y^\prime)|^2 + |\hat{x}_n+y_n|^2,
		\end{align*}
		and therefore it follows that the integrand in the first term of $I_\varepsilon^1(x)$ is odd with respect to $\hat{x}^\prime - \bar{y}^\prime$. Consequently, 
		\begin{align*}
		\int_{B_{\delta}(\hat{x}^\prime)\times(0,\delta)}J_\varepsilon(|A_\gamma(\hat{x},\hat{x})(\hat{x}-\bar{y})|)\nabla_{x^\prime}u(\hat{x})\cdot(\hat{x}^\prime-\bar{y}^\prime)\big|\det \text{D}F_\gamma(\hat{x})\big|\;\text{d}y = 0.
		\end{align*}
		Furthermore, $F_\gamma$ satisfies
		\begin{align*}
		\partial_{x_n}u(\hat{x})_{\big|\{x_n=0\}} = \partial_{x_n}\big(c \circ F_\gamma\big)(\hat{x})_{\big|\{x_n=0\}} = \nabla c(x^\prime,\gamma(x^\prime))\cdot(-\mathbf{n}(x^\prime,\gamma(x^\prime))) = 0,
		\end{align*}
		since $\partial_\mathbf{n}c = 0$ on $\partial\mathbb{R}^n_\gamma$. Thus, it holds
		\begin{align*}
		\partial_{x_n}u(\hat{x}) = \partial_{x_n}u(\hat{x}) - \partial_{x_n}u(\hat{x}^\prime,0) = \Big(\int_{0}^1\partial_{x_n}^2u(\hat{x}^\prime,tx_n)\:\text{d}t\Big)x_n.
		\end{align*}
		Hence, we obtain in the second term of $I_\varepsilon^1(x)$
		\begin{align*}
		&\Big|\int_{B_{\delta}(\hat{x}^\prime)\times(0,\delta)}J_\varepsilon(|A_\gamma(\hat{x},\hat{x})(\hat{x}-\bar{y})|)\partial_{x_n}u(\hat{x})(\hat{x}_n+\bar{y}_n)\big|\det \text{D}F_\gamma(\hat{x})\big|\;\text{d}y\Big| \\
		&\leq \int_{0}^1a_\varepsilon(x_n)|\partial_{x_n}^2u(\hat{x}^\prime,t\hat{x}_n)|\:\text{d}t,
		\end{align*}
		where we used the inequalities $|A_\gamma(\hat{x},\bar{y})(\hat{x}-\bar{y})| \geq K|\hat{x}-\bar{y}|$ for all $\hat{x},\bar{y}\in\mathbb{R}^n$ and $\big|\det \text{D}F_\gamma(\hat{x})\big|\leq K$ for all $\hat{x}\in\mathbb{R}^n$.
		Here, we defined
		\begin{align*}
			a_\varepsilon(x_n) := \sup_{x^\prime\in\mathbb{R}^{n-1}}\int_{B_{\delta}(\hat{x}^\prime)\times(0,\delta)}\rho_\varepsilon(|A_\gamma(\hat{x},\hat{x})(\hat{x}-\bar{y})|)\;\text{d}y.
		\end{align*}
		Note that it holds $a_\varepsilon\in L^2(\mathbb{R}_+)$ and $\|a_\varepsilon\|_{L^2(\mathbb{R}_+)}\leq K\sqrt{\varepsilon}$. This follows from the same calculation as in the proof of Lemma \ref{lemma3}.
		Computing the $L^2$-norm, we then obtain
		\begin{align*}
			&\int_{\mathbb{R}^n_+}\Big|\int_{B_{\delta}(\hat{x}^\prime)\times(0,\delta)}J_\varepsilon(|A_\gamma(\hat{x},\hat{x})(\hat{x}-\bar{y})|)\partial_{x_n}u(\hat{x})(\hat{x}_n+y_n)\big|\det \text{D}F_\gamma(\hat{x})\big|\;\text{d}y\Big|^2\text{d}\hat{x} \\
			&\leq \int_{\mathbb{R}^n_+}|a_\varepsilon(\hat{x}_n)|^2\Big(\frac{1}{\hat{x}_n}\int_{0}^{\hat{x}_n}|\partial_{\hat{x}_n}u(\hat{x}^\prime,z_n)|\text{d}z_n\Big)^2\text{d}\hat{x} \\
			&\leq  \int_{\mathbb{R}^{n-1}}\int_{0}^\infty|a_\varepsilon(\hat{x}_n)|^2\|\partial_{\hat{x}_n}u(\hat{x}^\prime,.)\|_{L^\infty(\mathbb{R})}^2\text{d}\hat{x}_n\text{d}\hat{x}^\prime \leq K\varepsilon\|u\|_{H^3(\mathbb{R}^n_+)}^2.
		\end{align*}
		 
		In the third term of $I_\varepsilon^1(x)$, we have
		\begin{align}
		&\int_{\mathbb{R}^n_+}\Big|\int_{B_{\delta}(\hat{x}^\prime)\times(0,\delta)}J_\varepsilon(|A_\gamma(\hat{x},\hat{x})(\hat{x}-\bar{y})|)R_2(\hat{x},\bar{y})\big|\det \text{D}F_\gamma(\hat{x})\big|\;\text{d}y\Big|^2\:\text{d}\hat{x} \nonumber\\
		&\leq K\int_{\mathbb{R}^n_+} a_\varepsilon(\hat{x}_n)\left(\int_{0}^1\int_{B_{\delta}(\hat{x}^\prime)\times(0,\delta)}\rho_\varepsilon(|A_\gamma(\hat{x},\hat{x})(\hat{x}-\bar{y})|)|D^2u(\bar{y}+t(\hat{x}-\bar{y}))|^2\:\text{d}y\text{d}t\right)\text{d}\hat{x} \nonumber \\
		&\leq K\left(\int_{0}^Ra_\varepsilon(\hat{x}_n)\:\text{d}\hat{x}_n\right)\|u\|_{H^3(\mathbb{R}^n_+)}^2 
		\leq K\varepsilon\|u\|_{H^3(\mathbb{R}^n_+)}^2 \label{auxest},
		\end{align}
		where we used the calculations as in \eqref{helpinequ1} and Lemma \ref{lemTraf}.
		Altogether, we end up with 
		\begin{align*}
		\big\|I^1_\varepsilon\big\|_{L^2(\mathbb{R}^n_\gamma)} \leq K\sqrt{\varepsilon}\|c\|_{H^3(\mathbb{R}^n_\gamma)},
		\end{align*}
		where we used the same estimates as in the proof of Lemma \ref{lemma3}. 
		
		Ad $I_\varepsilon^2(x)$: Here, we first rewrite the term $K_\varepsilon(|A_\gamma(\hat{x},\hat{x})(\hat{x}-\hat{y})|)$ in the following way:
		\begin{align*}
		K_\varepsilon(|A_\gamma(\hat{x},\hat{x})(\hat{x}-\bar{y})|) &= J_\varepsilon(|A_\gamma(\hat{x},\bar{y})(\hat{x}-\bar{y})|)\big(\big|\det\text{D}F_\gamma(\bar{y})\big|-\big|\det\text{D}F_\gamma(\hat{x})\big|\big) \\
		&+ \Big(J_\varepsilon(|A_\gamma(\hat{x},\bar{y})(\hat{x}-\bar{y})|) - J_\varepsilon(|A_\gamma(\hat{x},\hat{x})(\hat{x}-\bar{y})|)\Big)\big|\det\text{D}F_\gamma(\hat{x})\big|.
		\end{align*}
		Using this identity, we then get
		\begin{align*}
		|I_\varepsilon^2(x)| &\leq \Big|\int_{B_{\delta}(\hat{x}^\prime)\times(0,\delta)}J_\varepsilon(|A_\gamma(\hat{x},\bar{y})(\hat{x}-\bar{y})|)\big(\big|\det\text{D}F_\gamma(\bar{y})\big|-\big|\det\text{D}F_\gamma(\hat{x})\big|\big)\big(u(\hat{x})-u(\bar{y})\big)\;\text{d}y\Big| \\
		&+ \Big|\int_{B_{\delta}(\hat{x}^\prime)\times(0,\delta)}\Big(J_\varepsilon(|A_\gamma(\hat{x},\bar{y})(\hat{x}-\bar{y})|) - J_\varepsilon(|A_\gamma(\hat{x},\hat{x})(\hat{x}-\bar{y})|)\Big)\big|\det\text{D}F_\gamma(\hat{x})\big|\big(u(\hat{x})-u(\bar{y})\big)\;\text{d}y\Big|.
		\end{align*}
		In the first term on the right-hand side, we use the continuity of $|\det\text{D}F_\gamma|$, the fundamental theorem of calculus as well as the Cauchy-Schwarz inequality and the calculations in $I^1_\varepsilon(x)$ to estimate the $L^2(\mathbb{R}^n_+)$-norm
		\begin{align*}
		&\int_{\mathbb{R}^n_+}\Big|\int_{B_{\delta}(\hat{x}^\prime)\times(0,\delta)}J_\varepsilon(|A_\gamma(\hat{x},\bar{y})(\hat{x}-\bar{y})|)\big(\big|\det\text{D}F_\gamma(\bar{y})\big|-\big|\det\text{D}F_\gamma(\hat{x})\big|\big)\big(u(\hat{x})-u(\bar{y})\big)\;\text{d}y\Big|^2\text{d}\hat{x} \\
		&\leq K\varepsilon\int_{\mathbb{R}^n_+}\int_{B_{\delta}(\hat{x}^\prime)\times(0,\delta)}\int_{0}^1\rho_\varepsilon(|A_\gamma(\hat{x},\bar{y})(\hat{x}-\bar{y})|)|Du(\bar{y}+t(\hat{x}-\bar{y}))|^2\:\text{d}t\text{d}y\text{d}\hat{x} 
		\leq K\varepsilon\|u\|_{H^3(\mathbb{R}^n_+)}^2,
		\end{align*}
		where we used the same arguments as in \eqref{auxest} in the last step.
		In the second term on the right-hand side, we use the mean value theorem to conclude
		\begin{align}
		\big|J_\varepsilon(|A_\gamma(\hat{x},\bar{y})(\hat{x}-\bar{y})|) - J_\varepsilon(|A_\gamma(\hat{x},\hat{x})(\hat{x}-\bar{y})|)\big| &= \left|\int_{0}^1\nabla J_\varepsilon(|z|)_{\big|z=z_t}\cdot\big(A_\gamma(\hat{x},\bar{y})-A_\gamma(\hat{x},\hat{x})\big)(\hat{x}-\bar{y})\:\text{d}t\right| \nonumber\\
		&\leq  \int_{0}^1\frac{1}{\varepsilon}\frac{\rho_\varepsilon^\prime(|z|)}{|z|^2}_{\big|z=z_t}\:K|\hat{x}-\bar{y}|^2\:\text{d}t \nonumber\\
		&+ \int_{0}^1\frac{\rho_\varepsilon(|z|)}{|z|^3}_{\big|z=z_t}\:K|\hat{x}-\bar{y}|^2\:\text{d}t,  \label{calc1}
		\end{align}
		where we defined 
		\begin{align*}
		z_t &:= M_t(\hat{x},\bar{y})(\hat{x}-\bar{y}), \\
		M_t(\hat{x},\bar{y}) &:= A_\gamma(\hat{x},\hat{x}) + t\big(A_\gamma(\hat{x},\bar{y}) - A_\gamma(\hat{x},\hat{x})\big)
		\end{align*}
		for all $\hat{x},\bar{y}\in\mathbb{R}^n$ and all $t\in[0,1]$. Thanks to assumption \eqref{ass1}, it follows that $|M_t(\hat{x},\bar{y})-\text{Id}|\leq 3\alpha$ for all $\hat{x},\bar{y}\in\mathbb{R}^n$ and all $t\in[0,1]$. Therefore, $M_t(\hat{x},\bar{y})^{-1}$ exists for all $\hat{x},\bar{y}\in\mathbb{R}^n$ and all $t\in[0,1]$ and satisfies the inequality
		\begin{align*}
		|M_t(\hat{x},\bar{y})^{-1}| \leq \frac{1}{1-|M_t(\hat{x},\bar{y})-\text{Id}|} < \frac{1}{1-3\alpha}
		\end{align*}
		for all $\hat{x},\bar{y}\in\mathbb{R}^n$ and all $t\in[0,1]$. Using \eqref{calc1}, we then obtain for the second term of $I_\varepsilon^2(x)$
		\begin{align*}
		&\int_{\mathbb{R}^n_+}\Big|\int_{B_{\delta}(\hat{x}^\prime)\times(0,\delta)}\Big(J_\varepsilon(|A_\gamma(\hat{x},\bar{y})(\hat{x}-\bar{y})|) - J_\varepsilon(|A_\gamma(\hat{x},\hat{x})(\hat{x}-\bar{y})|)\Big)\big|\det\text{D}F_\gamma(\hat{x})\big|\big(u(\hat{x})-u(\bar{y})\big)\;\text{d}y\Big|^2\:\text{d}\hat{x} \\
		&\leq K\int_{\mathbb{R}^n_+}\left(\int_{0}^1\int_{B_{\delta}(\hat{x}^\prime)\times(0,\delta)}\int_{0}^1\left(\frac{1}{\varepsilon}\frac{\rho_\varepsilon^\prime(|z_t|)}{|z_t|^2}+\frac{\rho_\varepsilon(|z_t|)}{|z_t|^3}\right)\big|Du(\bar{y}+s(\hat{x}-\bar{y}))\big||\hat{x}-\bar{y}|^3\:\text{d}s\text{d}y\text{d}t\right)^2\text{d}\hat{x} \\
		&\leq K\int_{\mathbb{R}^n_+}\left(\int_{0}^1\int_{B_{\delta}(\hat{x}^\prime)\times(0,\delta)}\int_{0}^1\Big(\frac{1}{\varepsilon}\rho_\varepsilon^\prime(|z_t|)|\hat{x}-\bar{y}|+\rho_\varepsilon(|z_t|)\Big)\big|Du(\bar{y}+s(\hat{x}-\bar{y}))\big|\:\text{d}s\text{d}y\text{d}t\right)^2\text{d}\hat{x} \\		  
		&\leq K\varepsilon\int_{0}^1\int_{0}^1\int_{\mathbb{R}^n_+}\int_{\mathbb{R}^n_+}\Big(\frac{1}{\varepsilon}\rho_\varepsilon^\prime(|z_t|)|\hat{x}-\bar{y}|+\rho_\varepsilon(|z_t|)\Big)\big|Du(\bar{y}+s(\hat{x}-\bar{y}))\big|^2\:\text{d}y\text{d}\hat{x}\text{d}s\text{d}t \leq K\varepsilon\|u\|_{H^3(\mathbb{R}^n_+)}^2,
		\end{align*}
 		where we used the same arguments as above.
		Then, we conclude 
		\begin{align*}
			\big\|I^2_\varepsilon\big\|_{L^2(\mathbb{R}^n_\gamma)} \leq K\sqrt{\varepsilon}\|c\|_{H^3(\mathbb{R}^n_\gamma)}.
		\end{align*}
		In particular, this implies
		\begin{align*}
			\Big\|\mathcal{L}_\varepsilon^{\mathbb{R}^n_\gamma} c + \Delta c\Big\|_{L^2(\mathbb{R}^n_\gamma)} \leq K\sqrt{\varepsilon}\|c\|_{H^3(\mathbb{R}^n_\gamma)}
		\end{align*}
		for all $c\in C^{2,1}_0(\overline{\mathbb{R}^n_\gamma})$ with $\partial_\mathbf{n}c = 0$ on $\partial\mathbb{R}^n_\gamma$. Finally, a density argument, cf. Lemma \ref{lemma3}, yields 
		\begin{align*}
		\Big\|\mathcal{L}_\varepsilon^{\mathbb{R}^n_\gamma} c + \Delta c\Big\|_{L^2(\mathbb{R}^n_\gamma)} \leq K\sqrt{\varepsilon}\|c\|_{H^3(\mathbb{R}^n_\gamma)}
		\end{align*}
		for all $c\in H^3(\mathbb{R}^n_\gamma)$ with $\partial_\mathbf{n}c = 0$ on $\partial\mathbb{R}^n_\gamma$. This concludes the proof.
	\end{proof}
	\newtheorem{cor4}[lemma0]{Corollary}
	\begin{cor4}\label{cor4}
		Let $c\in H^2(\mathbb{R}^n_\gamma)$ with $\partial_{\mathbf{n}}c = 0$ on $\partial\mathbb{R}^n_\gamma$. Then, it holds
		\begin{align*}
			\Big\|\mathcal{L}_\varepsilon^{\mathbb{R}^n_\gamma} c + \Delta c\Big\|_{L^2(\mathbb{R}^n_\gamma)} \rightarrow 0 \;\;\;\text{as }\varepsilon\searrow 0.
		\end{align*}
	\end{cor4}
	\begin{proof}
		Due to Lemma \ref{lemma4}, it suffices to show that $\big\|\mathcal{L}_\varepsilon^{\mathbb{R}^n_\gamma} c + \Delta c\big\|_{L^2(\mathbb{R}^n_\gamma)}$ is bounded uniformly in $\varepsilon>0$ for all $c\in H^2(\mathbb{R}^n_\gamma)$ with $\partial_{\mathbf{n}}c = 0$ on $\partial\mathbb{R}^n_\gamma$. Then, we can apply the Banach Steinhaus Theorem, which implies the assertion.
		\newline
		In fact, it suffices to bound the error term \eqref{error1} in a suitable way. Using the same methods as in the proof of Lemma \ref{lemma4}, we can rewrite $\mathcal{R}_\varepsilon$ as in \eqref{error2}. First of all, we prove the assertion for functions $c\in C^{1,1}_0(\overline{\mathbb{R}^n_\gamma})$ with $\partial_{\mathbf{n}}c = 0$ on $\partial\mathbb{R}^n_\gamma$ and use a density argument afterwards to conclude the proof. Observe that it suffices to integrate in $\mathcal{R}_\varepsilon$ only over $B_{\delta}(\hat{x}^\prime)\times(0,\delta)$, since otherwise we can employ Lemma \ref{lemma2}, cf. proof of Lemma \ref{lemma4}.
		
		Ad $I^1_\varepsilon(x)$: Using a Taylor expansion, we obtain 
		\begin{align*}
			I_\varepsilon^1(x) &= \int_{B_{\delta}(\hat{x}^\prime)\times(0,\delta)}J_\varepsilon(|A_\gamma(\hat{x},\hat{x})(\hat{x}-\bar{y})|)\nabla_{x^\prime}u(\hat{x})\cdot(\hat{x}^\prime-\bar{y}^\prime)\big|\det \text{D}F_\gamma(\hat{x})\big|\;\text{d}y \\
			&+ \int_{B_{\delta}(\hat{x}^\prime)\times(0,\delta)}J_\varepsilon(|A_\gamma(\hat{x},\hat{x})(\hat{x}-\bar{y})|)\partial_{x_n}u(\hat{x})(\hat{x}_n+\bar{y}_n)\big|\det \text{D}F_\gamma(\hat{x})\big|\;\text{d}y \\ 
			&+ \int_{B_{\delta}(\hat{x}^\prime)\times(0,\delta)}J_\varepsilon(|A_\gamma(\hat{x},\hat{x})(\hat{x}-\bar{y})|)R_2(\hat{x},\bar{y})\big|\det \text{D}F_\gamma(\hat{x})\big|\;\text{d}y.
		\end{align*}
	As in the proof before, the first term on the right-hand side vanishes, since the integrand is odd with respect to $\hat{x}^\prime-\bar{y}^\prime$. In the second term, the properties of $F_\gamma$ imply 
	\begin{align*}
	\partial_{x_n}u(\hat{x})_{\big|\{x_n=0\}} = \partial_{x_n}\big(c \circ F_\gamma\big)(\hat{x})_{\big|\{x_n=0\}} = \nabla c(x^\prime,\gamma(x^\prime))\cdot(-\mathbf{n}(x^\prime,\gamma(x^\prime))) = 0,
	\end{align*}
	since $\partial_\mathbf{n}c = 0$ on $\partial\mathbb{R}^n_\gamma$, and therefore
	\begin{align*}
	\partial_{x_n}u(\hat{x}) = \partial_{x_n}u(\hat{x}) - \partial_{x_n}u(\hat{x}^\prime,0) = \Big(\int_{0}^1\partial_{x_n}^2u(\hat{x}^\prime,tx_n)\:\text{d}t\Big)x_n.
	\end{align*}
	This yields
	\begin{align*}
	 &\Big|\int_{B_{\delta}(\hat{x}^\prime)\times(0,\delta)}J_\varepsilon(|A_\gamma(\hat{x},\hat{x})(\hat{x}-\bar{y})|)\partial_{x_n}u(\hat{x})(\hat{x}_n+\bar{y}_n)\big|\det \text{D}F_\gamma(\hat{x})\big|\;\text{d}y\Big| \\
	 &\leq K\int_{0}^1|\partial_{x_n}^2u(\hat{x}^\prime,t\hat{x}_n)|\Big(\int_{B_{\delta}(\hat{x}^\prime)\times(0,\delta)}\rho_\varepsilon(|A_\gamma(\hat{x},\hat{x})(\hat{x}-\bar{y})|)\;\text{d}y\Big)\:\text{d}t, \\
	 &\leq K\Big(\frac{1}{\hat{x}_n}\int_{0}^{\hat{x}_n}|\partial_{x_n}^2u(\hat{x}^\prime,z_n)|\;\text{d}z_n\Big)\Big(\int_{B_{\delta}(\hat{x}^\prime)\times(0,\delta)}\rho_\varepsilon(|A_\gamma(\hat{x},\hat{x})(\hat{x}-\bar{y})|)\;\text{d}y\Big)
 	\end{align*}
	where we used the inequalities $|A_\gamma(\hat{x},\bar{y})(\hat{x}-\bar{y})| \geq C|\hat{x}-\bar{y}|$ for all $\hat{x},\bar{y}\in\mathbb{R}^n$ and $\big|\det \text{D}F_\gamma(\hat{x})\big|\leq C$ for all $\hat{x}\in\mathbb{R}^n$.
	Computing the $L^2$-norm, we then get
	\begin{align*}
		\int_{\mathbb{R}^n_+}\Big|\int_{B_{\delta}(\hat{x}^\prime)\times(0,\delta)}J_\varepsilon(|A_\gamma(\hat{x},\hat{x})(\hat{x}-\bar{y})|)\partial_{x_n}u(\hat{x})(\hat{x}_n+\bar{y}_n)\big|\det \text{D}F_\gamma(\hat{x})\big|\;\text{d}y\Big|^2\text{d}\hat{x} 
		\leq K\|u\|_{H^2(\mathbb{R}^n_+)},
	\end{align*}
	where we used the same ideas as in Corollary \ref{cor3}, cf. \eqref{auxest2}.
	In the third term, it holds
	\begin{align*}
		&\int_{B_{\delta}(\hat{x}^\prime)\times(0,\delta)}J_\varepsilon(|A_\gamma(\hat{x},\hat{x})(\hat{x}-\bar{y})|)R_2(\hat{x},\bar{y})\big|\det \text{D}F_\gamma(\hat{x})\big|\;\text{d}y \\
		&\leq K \int_{\mathbb{R}^n_+}\int_{B_{\delta}(\hat{x}^\prime)\times(0,\delta)}\int_{0}^1\rho_\varepsilon(|A_\gamma(\hat{x},\hat{x})(\hat{x}-\bar{y})|)|D^2u(\bar{y}+t(\hat{x}-\bar{y}))|^2\:\text{d}t\text{d}y\text{d}\hat{x} \\
		&\leq K\int_{0}^1\int_{\mathbb{R}^{n-1}}\int_{\mathbb{R}^{n-1}}\|\rho_\varepsilon(|A_\gamma((\hat{x}^\prime,.),(\hat{x}^\prime,.))(\hat{x}^\prime-\bar{y}^\prime,.)|)\|_{L^1(\mathbb{R})}\|D^2u(\bar{y}^\prime+t(\hat{x}^\prime-\bar{y}^\prime),.)\|_{L^2(\mathbb{R}_+)}\:\text{d}\hat{x}^\prime\text{d}y^\prime\text{d}t \\
		&\leq K\|u\|_{H^2(\mathbb{R}^n_\gamma)}^2.
	\end{align*}
	Here, we used similar arguments as before, cf. \eqref{auxest1} and \eqref{auxest}.
	Altogether, we have
	\begin{align*}
		\|I^1_\varepsilon\|_{L^2(\mathbb{R}^n_\gamma)} \leq K\|c\|_{H^2(\mathbb{R}^n_\gamma)}.
	\end{align*}
	
	Ad $I^2_\varepsilon(x)$: Here, we have
	\begin{align*}
	|I_\varepsilon^2(x)| &\leq \Big|\int_{B_{\delta}(\hat{x}^\prime)\times(0,\delta)}J_\varepsilon(|A_\gamma(\hat{x},\bar{y})(\hat{x}-\bar{y})|)\big(\big|\det\text{D}F_\gamma(\bar{y})\big|-\big|\det\text{D}F_\gamma(\hat{x})\big|\big)\big(u(\hat{x})-u(\bar{y})\big)\;\text{d}y\Big| \\
	&+ \Big|\int_{B_{\delta}(\hat{x}^\prime)\times(0,\delta)}\Big(J_\varepsilon(|A_\gamma(\hat{x},\bar{y})(\hat{x}-\bar{y})|) - J_\varepsilon(|A_\gamma(\hat{x},\hat{x})(\hat{x}-\bar{y})|)\Big)\big|\det\text{D}F_\gamma(\hat{x})\big|\big(u(\hat{x})-u(\bar{y})\big)\;\text{d}y\Big|,
	\end{align*}
	where we used the same identity as in the proof of Lemma \ref{lemma4}. In the first term on the right-hand side, we observe that
	\begin{align*}
	\Big|\big(\big|\det\text{D}F_\gamma(\bar{y})\big|-\big|\det\text{D}F_\gamma(\hat{x})\big|\big)\big(u(\hat{x})-u(\bar{y})\big)\Big| \leq K\Big(\int_{0}^1|Du(\bar{y}+t(\hat{x}-\bar{y}))|\text{d}t\Big)|\hat{x}-\bar{y}|^2,
	\end{align*}
	which yields
	\begin{align*}
	&\int_{\mathbb{R}^n_+}\Big|\int_{B_{\delta}(\hat{x}^\prime)\times(0,\delta)}J_\varepsilon(|A_\gamma(\hat{x},\bar{y})(\hat{x}-\bar{y})|)\big(\big|\det\text{D}F_\gamma(\bar{y})\big|-\big|\det\text{D}F_\gamma(\hat{x})\big|\big)\big(u(\hat{x})-u(\bar{y})\big)\;\text{d}y\Big|^2\text{d}\hat{x} \\
	&\leq K\|u\|_{H^2(\mathbb{R}^n_+)}^2.
	\end{align*}
	Here, we concluded with the same arguments as in the proof before.
	In the second term, we again use the mean value theorem, cf. Lemma \ref{lemma4}, to obtain
	\begin{align*}
	&\int_{\mathbb{R}^n_+}\Big|\int_{B_{\delta}(\hat{x}^\prime)\times(0,\delta)}\Big(J_\varepsilon(|A_\gamma(\hat{x},\bar{y})(\hat{x}-\bar{y})|) - J_\varepsilon(|A_\gamma(\hat{x},\hat{x})(\hat{x}-\bar{y})|)\Big)\big|\det\text{D}F_\gamma(\hat{x})\big|\big(u(\hat{x})-u(\bar{y})\big)\;\text{d}y\Big|^2\text{d}\hat{x} \\
	&\leq K\int_{\mathbb{R}^n_+}\left(\int_{0}^1\int_{B_{\delta}(\hat{x}^\prime)\times(0,\delta)}\int_{0}^1\Big(\frac{1}{\varepsilon}\rho_\varepsilon^\prime(|z_t|)|\hat{x}-\bar{y}|+\rho_\varepsilon(|z_t|)\Big)\big|Du(\bar{y}+s(\hat{x}-\bar{y}))\big|\:\text{d}s\text{d}y\text{d}t\right)^2\text{d}\hat{x} \\ 
	&\leq K\|u\|_{H^2(\mathbb{R}^n_+)}^2, 
	\end{align*}
	where we defined 
	\begin{align*}
	z_t &:= M_t(\hat{x},\bar{y})(\hat{x}-\bar{y}), \\
	M_t(\hat{x},\bar{y}) &:= A_\gamma(\hat{x},\hat{x}) + t\big(A_\gamma(\hat{x},\bar{y}) - A_\gamma(\hat{x},\hat{x})\big)
	\end{align*}
	for all $\hat{x},\bar{y}\in\mathbb{R}^n$ and all $t\in[0,1]$. In fact, this estimate follows by the same methods as in the proof of Lemma \ref{lemma4}. Altogether, it follows that 
	\begin{align*}
		\|\mathcal{R}_\varepsilon\tilde{c}\|_{L^2(\mathbb{R}^n_\gamma)} \leq K\|c\|_{H^2(\mathbb{R}^n_\gamma)}
	\end{align*}
	for all $c\in C^{1,1}_0(\overline{\mathbb{R}^n_\gamma})$ with $\partial_{\mathbf{n}}c = 0$ on $\partial\mathbb{R}^n_\gamma$, and therefore
	\begin{align*}
		\Big\|\mathcal{L}_\varepsilon^{\mathbb{R}^n_\gamma} c + \Delta c\Big\|_{L^2(\mathbb{R}^n_\gamma)} \leq K\|c\|_{H^2(\mathbb{R}^n_\gamma)}
	\end{align*}	
	for all $c\in C^{1,1}_0(\overline{\mathbb{R}^n_\gamma})$ with $\partial_{\mathbf{n}}c = 0$ on $\partial\mathbb{R}^n_\gamma$. Finally, a denseness argument, cf. Lemma \ref{lemma4}, yields that this estimate also holds true for all $c\in H^2(\mathbb{R}^n_\gamma)$ with $\partial_{\mathbf{n}}c = 0$ on $\partial\mathbb{R}^n_\gamma$. In the last step, we employ the Banach Steinhaus Theorem. Then, the claim follows.
	\end{proof}
	\section{Main Result}\label{main}
	In this section, we want to state and prove the main result on the strong convergence of the nonlocal operator $\mathcal{L}_\varepsilon$ on bounded smooth domains $\Omega\subset\mathbb{R}^n$. 
	\newtheorem{theorem1}{Theorem}[section]
	\begin{theorem1}\label{theorem1}
		Let $\Omega\subset\mathbb{R}^n$, $n=2,3$, be a bounded domain with $C^2$-boundary. Then, for every $c\in H^2(\Omega)$ with $\partial_{\mathbf{n}}c = 0$ on $\partial\Omega$, it holds
		\begin{align}
			\Big\|\mathcal{L}_\varepsilon^\Omega c + \Delta c\Big\|_{L^2(\Omega)} \rightarrow 0\;\;\;\text{as }\varepsilon\searrow 0. 
		\end{align}
	\end{theorem1} 
	\begin{proof}
		Since $\partial\Omega$ is compact, there exist open sets $U_1,\ldots,U_N\subset\mathbb{R}^n$ and $\gamma_1,\ldots,\gamma_N\in C^2_0(\mathbb{R}^{n-1})$ such that, up to a rotation, $\Omega\cap U_j = \mathbb{R}^n_{\gamma_j}\cap U_j$ for all $j=1,\ldots,N$ and $\partial\Omega\subseteq \bigcup_{j=1}^N U_j$. Since also $\Omega\setminus\big(\bigcup_{j=1}^N U_j\big)$ is compact, there exists an open, bounded set $U_0\subset\mathbb{R}^n$ such that $\Omega\setminus\big(\bigcup_{j=1}^N U_j\big)\subset U_0$ and $\overline{U_0}\subset\Omega$. Then, we choose some $\gamma_0\in C^2_0(\mathbb{R}^{n-1})$ such that $\overline{U_0}\subset\mathbb{R}^n_{\gamma_0}$. Altogether, we have 
		\begin{align*}
		\overline{\Omega} \subseteq \bigcup_{j=0}^N U_j\;\;\;\text{and}\;\;\;\Omega\cap U_j = \mathbb{R}^n_{\gamma_j} \cap U_j\;\;\;\text{for all }j=0,\ldots,N.
		\end{align*}
		Next, we choose a partition of unity $\varphi_j$, $j=0,\ldots,N$, on $\overline{\Omega}$. Without loss of generality, we can assume that $\partial_{\mathbf{n}_j}{\varphi_j} = 0$ on $\partial\mathbb{R}^n_{\gamma_j}$ for all $j=0,\ldots,N$, where $\mathbf{n}_j$ denotes the outer unit normal to $\partial\mathbb{R}^n_{\gamma_j}$. Otherwise, $\partial\mathbb{R}^n_{\gamma_j}$ admits a tubular neighborhood $U_{a_j}$ of width $a_j>0$, cf. \cite[Section 2.3]{PrSi}. Then, the restriction of $\varphi_j$ to $\partial\mathbb{R}^n_{\gamma_j}$ can be extended to a function $\hat{\varphi_j}$ on $U_{a_j}$, which is constant in the direction of $\mathbf{n}_j$ by setting $\hat{\varphi_j}(x) := \varphi_j(\pi_{\partial\mathbb{R}^n_{\gamma_j}}(x))$ for $x\in U_{a_j}$, where $\pi_{\partial\mathbb{R}^n_{\gamma_j}}: U_{a_j} \rightarrow \partial\mathbb{R}^n_{\gamma_j}$ denotes the projection on $\partial\mathbb{R}^n_{\gamma_j}$.
		\newline
		Finally, we choose functions $\psi_j\in C^\infty_0(\mathbb{R}^n)$ such that $\text{supp }\psi_j\subseteq U_j$, $\psi_j\geq0$ and $\psi_j\equiv1$ in $\text{supp }\varphi_j$ for all $j=0,\ldots,N$.
		\newline
		\newline
		Now, let $c\in H^2(\Omega)$ with $\partial_{\mathbf{n}}c = 0$ on $\partial\Omega$ be arbitrary. Using the identity $c = \sum_{j=0}^N\varphi_jc$, we get
		\begin{align*}
		\mathcal{L}^\Omega_\varepsilon c + \Delta c &= \mathcal{L}^\Omega_\varepsilon\Big(\sum_{j=0}^N\varphi_jc\Big) + \Delta\Big(\sum_{j=0}^N\varphi_jc\Big) = \sum_{j=0}^N\Big(\mathcal{L}^\Omega_\varepsilon\big(\varphi_jc\big) + \Delta\big(\varphi_jc\big)\Big) \\
		&= \sum_{j=0}^N\Big(\psi_j\mathcal{L}^\Omega_\varepsilon\big(\varphi_jc\big) + (1-\psi_j)\mathcal{L}^\Omega_\varepsilon\big(\varphi_jc\big)+ \Delta\big(\varphi_jc\big)\Big) \\
		&=: \sum_{j=0}^N\Big(I_{\varepsilon,j}^1 + I_{\varepsilon,j}^2 + \Delta\big(\varphi_jc\big)\Big).
		\end{align*}
		In the next step, we analyze these terms separately. 
		
		Ad $I_{\varepsilon,j}^1$: For almost all $x\in\Omega$ and all $j=0,\ldots,N$ it holds:
		\begin{align*}
		\psi_j(x)\mathcal{L}^\Omega_\varepsilon\big(\varphi_jc\big)(x) &= \psi_j(x)\int_{\Omega}J_\varepsilon(|x-y|)\big(\varphi_j(x)c(x)- \varphi_j(y)c(y)\big)\;\text{d}y \\
		&= \psi_j(x)\int_{\mathbb{R}^n_{\gamma_j}}J_\varepsilon(|x-y|)\big(\varphi_j(x)c(x)- \varphi_j(y)c(y)\big)\;\text{d}y \\
		&\;\;\;+ \psi_j(x)\int_{\Omega\setminus U_j}J_\varepsilon(|x-y|)\varphi_j(x)c(x)\;\text{d}y - \psi_j(x)\int_{\mathbb{R}^n_{\gamma_j}\setminus U_j}J_\varepsilon(|x-y|)\varphi_j(x)c(x)\;\text{d}y \\
		&=: \hat{I}_{\varepsilon,j}^1 + \hat{I}_{\varepsilon,j}^2 - \hat{I}_{\varepsilon,j}^3.
		\end{align*}
		In $\hat{I}_{\varepsilon,j}^2$, we observe that $\delta_{2,j} := \text{dist}(\text{supp }\psi_j,\Omega\setminus U_j) > 0$ and therefore $|x-y|\geq\delta_{2,j}$. Hence,
		$\|\hat{I}_{\varepsilon,j}^2\|_{L^2(\Omega)}\rightarrow0$ as $\varepsilon\searrow0$ for all $j=0,\ldots,N$ by Lemma \ref{lemma2}.
		\newline
		In $\hat{I}_{\varepsilon,j}^3$, it holds $\delta_{3,j}:=\text{dist}(\text{supp }\psi_j,\mathbb{R}^n_{\gamma_j}\setminus U_j) > 0$. This implies $\|\hat{I}_{\varepsilon,j}^3\|_{L^2(\Omega)}\rightarrow0$ as $\varepsilon\searrow0$ for all $j=0,\ldots,N$ again by Lemma \ref{lemma2}.
		\newline
		In the next step, we consider $\hat{I}_{\varepsilon,j}^1 + \Delta\big(\varphi_jc\big)$. Since $\text{supp }\psi_j\subseteq U_j$, we have 
		\begin{align}\label{calc2}
		\hat{I}_{\varepsilon,j}^1 + \Delta\big(\varphi_jc\big) = \psi_j\mathcal{L}^{\mathbb{R}^n_{\gamma_j}}_\varepsilon\big(\varphi_jc\big) + \Delta\big(\varphi_jc\big) = \psi_j\Big(\mathcal{L}^{\mathbb{R}^n_{\gamma_j}}_\varepsilon\big(\varphi_jc\big) + \Delta\big(\varphi_jc\big)\Big).
		\end{align}
		Due to the properties of $\varphi_j$, it holds $\varphi_jc\in H^2(\mathbb{R}^n_{\gamma_j})$ and $\partial_\mathbf{n}(\varphi_jc)=0$ on $\partial\mathbb{R}^n_{\gamma_j}$. Therefore, \eqref{calc2} vanishes as $\varepsilon\searrow0$ by Corollary \ref{cor4}. 
		
		Ad $I_{\varepsilon,j}^2$: Due to our choice of the functions $\psi_j$, it holds $\text{supp}(1-\psi_j)\cap\text{supp }\varphi_j=\emptyset$. Thus, we have $\delta_j := \text{dist}(\text{supp}(1-\psi_j),\text{supp }\varphi_j) >0$. Therefore, $\|I_{\varepsilon,j}^2\|_{L^2(\Omega)} \rightarrow 0$ as $\varepsilon\searrow0$ for all $j=0,\ldots,N$ by Lemma \ref{lemma2}. This concludes the proof.
	\end{proof}
	\newtheorem{cor6}[theorem1]{Corollary}	
	\begin{cor6}\label{cor6}
		In addition, if $c\in H^3(\Omega)$ and $\Omega$ is of class $C^3$, it even holds
		\begin{align*}
			\Big\|\mathcal{L}_\varepsilon^\Omega c + \Delta c\Big\|_{L^2(\Omega)} \leq K\sqrt{\varepsilon}\|c\|_{H^3(\Omega)}.
		\end{align*}
	\end{cor6}
	\begin{proof}
		This can be proven analogously as Theorem \ref{theorem1} using Lemma \ref{lemma4} instead of Corollary \ref{cor4}. 
	\end{proof}
\section{Nonlocal-to-Local Convergence of the Cahn-Hilliard Equation}\label{nonloctoloc}
	\newtheorem{cor1}{Theorem}[section]
	\begin{cor1}\label{cor1}
		Let the initial data $c_{0,\varepsilon}\in L^2(\Omega)$ satisfy $c_{0,\varepsilon} \rightarrow c_0$ in $L^2(\Omega)$ at rate $\mathcal{O}(\sqrt{\varepsilon})$ as $\varepsilon\searrow 0$ for $c_0\in H^1(\Omega)$. 
		Let the weak solution 
		$c\in L^\infty(0,T;H^1(\Omega))\cap L^2(0,T;H^2(\Omega))$ with $f^\prime(c) \in L^2(\Omega_T)$
		of the local Cahn-Hilliard equation satisfy 
		$c\in L^2(0,T;H^3(\Omega))$.
		Then, the weak solution $c_\varepsilon$ of the nonlocal Cahn-Hilliard equation \eqref{NLCH1}-\eqref{NLCH2} converges strongly to the strong solution of the local Cahn-Hilliard equation \eqref{CH1}-\eqref{CH2} in $L^\infty(0,T;H^{-1}(\Omega))\cap L^2(\Omega_T)$ at rate $\mathcal{O}(\sqrt{\varepsilon})$ as $\varepsilon\searrow 0$. 
	\end{cor1}
	\begin{proof}
		Let us define the functions $u:= c_\varepsilon - c$ and $w:= \mu_\varepsilon - \mu$. Then, $(u,w)$ solves the system
		\begin{align}
			\partial_tu &= \Delta w, \label{equ3} \\
			w &= \mathcal{L}_\varepsilon^\Omega c_\varepsilon + \Delta c + f^\prime(c_\varepsilon) - f^\prime(c) .\label{equ4}
		\end{align}
		In the next step, we test \eqref{equ3} by $(-\Delta_N)^{-1}u$. 
		Here, $v:=(-\Delta_N)^{-1}u \in H^1(\Omega)\cap L^2_{(0)}(\Omega)$ denotes the unique solution of 
		\begin{align*}
			\int_{\Omega}\nabla v\cdot\nabla\varphi\:\text{d}x = \int_{\Omega}u\varphi\:\text{d}x
		\end{align*}
		for all $\varphi\in H^1(\Omega)\cap L^2_{(0)}(\Omega)$.
		This yields
		\begin{align}\label{equ5}
			\frac{\text{d}}{\text{d}t}\frac{1}{2}\|u\|_{H^{-1}(\Omega)}^2 = \int_{\Omega}\Delta w(-\Delta_N)^{-1}u\:\text{d}x = - \int_{\Omega}wu\:\text{d}x.
		\end{align}
		Here, we also used the following calculation
		\begin{align*}
		\int_{\Omega}\partial_tu(-\Delta_N)^{-1}u\:\text{d}x = \frac{\text{d}}{\text{d}t}\int_{\Omega}\frac{1}{2}\big|(-\Delta_N)^{-1/2}u\big|^2\:\text{d}x = \frac{\text{d}}{\text{d}t}\frac{1}{2}\|u\|^2_{H^{-1}(\Omega)}.
		\end{align*}
		Furthermore, we multiply \eqref{equ4} by $u$ and integrate over $\Omega$. Hence,
		\begin{align*}
			\int_{\Omega}wu\:\text{d}x = \int_{\Omega}\mathcal{L}^{\Omega}_\varepsilon c_\varepsilon u\:\text{d}x + \int_{\Omega}\Delta cu\:\text{d}x + \int_{\Omega}\big(f^\prime(c_\varepsilon) - f^\prime(c)\big)u\:\text{d}x.
		\end{align*}
		Using the assumptions on $f$, we can estimate the last integral on the right-hand side from below by
		\begin{align*}
		\int_{\Omega}\big(f^\prime(c_\varepsilon) - f^\prime(c)\big)u\:\text{d}x \geq -\int_{\Omega}\alpha(c_\varepsilon - c)u\:\text{d}x = -\int_{\Omega}\alpha|u|^2\:\text{d}x.
		\end{align*}
		Therefore, we have
		\begin{align}
		\int_{\Omega}wu\:\text{d}x &\geq \int_{\Omega}\mathcal{L}^{\Omega}_\varepsilon uu\:\text{d}x + \int_{\Omega}\big(\mathcal{L}^{\Omega}_\varepsilon c +\Delta c\big)u\:\text{d}x -\alpha\int_{\Omega}|u|^2\:\text{d}x \nonumber\\
		&= \mathcal{E}_\varepsilon(u) + \int_{\Omega}\big(\mathcal{L}^{\Omega}_\varepsilon c +\Delta c\big)u\:\text{d}x -\alpha\int_{\Omega}|u|^2\:\text{d}x, \label{calc3}
		\end{align}
		where we used 
		\begin{align}\label{E}
		\int_{\Omega}\mathcal{L}^{\Omega}_\varepsilon uu\:\text{d}x = \frac{1}{4}\int_{\Omega}\int_{\Omega}J_\varepsilon(x-y)\big|u(x) - u(y)\big|^2\:\text{d}x\:\text{d}y = \mathcal{E}_\varepsilon(u)
		\end{align}
		in the last step. Now, combining \eqref{equ5} and \eqref{calc3} yields
		\begin{align*}
		\frac{\text{d}}{\text{d}t}\frac{1}{2}\|u\|^2_{H^{-1}(\Omega)} + \frac{1}{2}\|u\|^2_{L^2(\Omega)} + \mathcal{E}_\varepsilon(u) &\leq \Big(\alpha+\frac{1}{2}\Big)\|u\|^2_{L^2(\Omega)} - \int_{\Omega}\big(\mathcal{L}^{\Omega}_\varepsilon c+\Delta c\big)u\:\text{d}x \nonumber\\
		&\leq \Big(\alpha + 1\Big)\|u\|^2_{L^2(\Omega)} + \frac{1}{2}\Big\|\mathcal{L}^{\Omega}_\varepsilon c + \Delta c\Big\|^2_{L^2(\Omega)}.
		\end{align*} 
		Employing inequality \eqref{ineqDav} from Lemma \ref{lemma0} with $\delta = \frac{1}{2(\alpha+1)}$, we then obtain
		\begin{align}\label{equ6}
		\frac{\text{d}}{\text{d}t}\frac{1}{2}\|u\|^2_{H^{-1}(\Omega)} + \frac{1}{2}\|u\|^2_{L^2(\Omega)} + \frac{1}{2}\mathcal{E}_\varepsilon(u) \leq C\|u\|^2_{H^{-1}(\Omega)} + C\Big\|\mathcal{L}^{\Omega}_\varepsilon c + \Delta c\Big\|^2_{L^2(\Omega)}.
		\end{align}
		Finally, Gronwall`s inequality yields
		\begin{align*}
			\|u(t)\|_{H^{-1}(\Omega)}^2 + \frac{1}{2}\|u\|_{L^2(\Omega_t)}^2 + \int_{0}^t\mathcal{E}_\varepsilon(u)(\tau)\:\text{d}\tau \leq K\sqrt{\varepsilon}\Big(1+\int_{0}^t\|c(\tau)\|_{H^3(\Omega)}^2\:\text{d}\tau\Big)\exp\Big(\int_{0}^tK\:\text{d}\tau\Big)
		\end{align*}
		for almost all $t\in(0,T)$, where we used Corollary \ref{cor6} and the assumptions on the initial data on the right-hand side. Using \eqref{nonlockonv}, we then conclude the proof.
	\end{proof}
	\newtheorem{cor2}[cor1]{Corollary}
	\begin{cor2}
		Under the assumptions of Corollary \ref{cor1}, it even holds
		\begin{align*}
			c_\varepsilon \rightarrow c\;\;\;\text{in }L^\infty(0,T;H^s(\Omega))\;\;\;\text{ as }\varepsilon\searrow 0
		\end{align*} 
		for all $s\in(-1,0)$ and furthermore,
		\begin{align*}
			c_\varepsilon \rightarrow c\;\;\;\text{in }L^2(0,T;L^p(\Omega))\;\;\;\text{ as }\varepsilon\searrow 0
		\end{align*}
		for all $p\in[2,6)$.
		\begin{proof}
			In Theorem \ref{cor1}, we have already shown that
			 \begin{align*}
			 c_\varepsilon \rightarrow c\;\;\;\text{in }L^\infty(0,T;H^{-1}(\Omega))
			 \end{align*} 
			 as $\varepsilon\searrow 0$. Since the sequence $(c_\varepsilon)_{\varepsilon>0}\subset L^\infty(0,T;L^2(\Omega))$ is bounded, cf. \cite[Theorem 2.3]{Davoli2}, we get
			 \begin{align*}
			 	\|c_\varepsilon-c\|_{L^\infty(0,T;H^s(\Omega))} \leq K\|c_\varepsilon-c\|_{L^\infty(0,T;L^2(\Omega))}^\theta\;\|c_\varepsilon-c\|_{L^\infty(0,T;H^{-1}(\Omega))}^{1-\theta}
			 \end{align*}
			 for $s=\theta-1$, $\theta\in(0,1)$ and thus the first assertion follows. 
			 
			 In the second assertion, we use that $(c_\varepsilon)_{\varepsilon>0}\subset L^2(0,T;H^1(\Omega)) \hookrightarrow L^2(0,T;L^6(\Omega))$ is bounded, cf. \cite[Theorem 2.2]{Davoli2}. Then, an interpolation yields
			 \begin{align*}
			 	\|c_\varepsilon-c\|_{L^2(0,T;L^p(\Omega))} \leq K\|c_\varepsilon-c\|_{L^2(\Omega_T)}^\theta\;\|c_\varepsilon-c\|_{L^2(0,T;L^6(\Omega))}^{1-\theta}
			 \end{align*}
			 for all $p=\frac{6}{1+2\theta}$, where $\theta\in(0,1]$. This finishes the proof. 
		\end{proof}
	\end{cor2}
	\newtheorem{rem2}[cor1]{Remark}
	\begin{rem2}
		\upshape
		 Lastly, we want to mention that our methods from Section \ref{nonloctoloc} can also be used to prove nonlocal-to-local convergence for the Allen-Cahn equation. The Allen-Cahn equation is given by
		 \begin{align}\label{AC}
		 	\partial_tc = \Delta c - f^\prime(c)\;\;\;\text{in }\Omega_T
		 \end{align}
		 together with boundary and initial conditions
		 \begin{align}
		 	\partial_{\mathbf{n}}c &= 0\;\;\;\;\text{on }\partial\Omega\times(0,T), \\
		 	c|_{t=0} &= c_0\;\;\;\text{in }\Omega\label{ACIC}.
		 \end{align}
		 The nonlocal version is given by
		 \begin{align}\label{NLAC}
		 	\partial_tc = -\mathcal{L}_\varepsilon c - f^\prime(c)\;\;\;\text{in }\Omega_T
		 \end{align}
		 with initial condition
		 \begin{align}\label{NLACIC}
		 	c|_{t=0} &= c_{0}\;\;\;\text{in }\Omega.
		 \end{align}
		 Observe that we do not need to impose any boundary condition for the nonlocal Allen-Cahn equation. Here, we use the same notation and the same prerequisites as before, cf. Section \ref{intro}. Then, we can prove the following assertion.
	\end{rem2}
	\newtheorem{cor5}[cor1]{Theorem}
	\begin{cor5}
			Let the initial data $c_{0,\varepsilon}\in L^2(\Omega)$ satisfy $c_{0,\varepsilon} \rightarrow c_0$ at rate $\mathcal{O}(\sqrt{\varepsilon})$ in $L^2(\Omega)$ as $\varepsilon\searrow 0$ for $c_0\in H^1(\Omega)$. 
			Let the weak solution
			$c\in L^\infty(0,T;H^1(\Omega))\cap L^2(0,T;H^2(\Omega))$ with $f^\prime(c)\in L^2(\Omega_T)$
			of the local Allen-Cahn equation satisfy $c\in L^2(0,T;H^3(\Omega))$.
			Then, the weak solution $c_\varepsilon$ of the nonlocal Allen-Cahn equation \eqref{NLAC}-\eqref{NLACIC} converges strongly to the strong solution of the local Allen-Cahn equation \eqref{AC}-\eqref{ACIC} in $L^\infty(0,T;L^2(\Omega))$ at rate $\mathcal{O}(\sqrt{\varepsilon})$ as $\varepsilon\searrow0$. 
	\end{cor5}
	\begin{proof}
		First of all, we define $u:= c_\varepsilon - c$. Then, $u$ is a solution of 
		\begin{align}\label{ACu}
			\partial_tu = - \mathcal{L}^\Omega_\varepsilon c_\varepsilon -f^\prime(c_\varepsilon) - \Delta c + f^\prime(c).
		\end{align}
		Testing \eqref{ACu} with $u$, yields
		\begin{align*}
			\int_{\Omega}\partial_tuu\;\text{d}x &= -\int_{\Omega}\big(\mathcal{L}^\Omega_\varepsilon c_\varepsilon + \Delta c\big)u\;\text{d}x - \int_{\Omega}\big(f^\prime(c_\varepsilon) - f^\prime(c)\big)u\;\text{d}x \\
			&= -\int_{\Omega}\big(\mathcal{L}^\Omega_\varepsilon c + \Delta c\big)u\;\text{d}x - \int_{\Omega}\mathcal{L}^\Omega_\varepsilon uu\;\text{d}x-\int_{\Omega}\big(f^\prime(c_\varepsilon) - f^\prime(c)\big)u\;\text{d}x,
		\end{align*}
		where the left-hand side also satisfies 
		\begin{align*}
			\int_{\Omega}\partial_tuu\;\text{d}x = \frac{\text{d}}{\text{d}t}\frac{1}{2}\|u\|_{L^2(\Omega)}^2.
		\end{align*}
		Employing the properties of $f$ and \eqref{E}, we then end up with
		\begin{align*}
			\frac{\text{d}}{\text{d}t}\frac{1}{2}\|u\|_{L^2(\Omega)}^2 + \mathcal{E}_\varepsilon(u) 
			\leq \Big(\alpha + \frac{1}{2}\Big)\|u\|^2_{L^2(\Omega)} + \frac{1}{2}\Big\|\mathcal{L}^{\Omega}_\varepsilon c + \Delta c\Big\|^2_{L^2(\Omega)}. 
		\end{align*}
		In the last step, we use inequality \eqref{ineqDav} from Lemma \ref{lemma0} with $\delta = \frac{1}{2(\alpha+\frac{1}{2})}$, Gronwall`s inequality as well as Theorem \ref{theorem1} and \eqref{nonlockonv}. Then, the claim follows.
	\end{proof}
	\section*{Acknowledgments}
	The second author was partially supported by the Graduiertenkolleg 2339 \textit{IntComSin} of the Deutsche Forschungsgemeinschaft  (DFG, German Research Foundation) -- Project-ID 321821685. The support is gratefully acknowledged.
	%\newpage
	\renewcommand{\refname}{References}
	
\end{document}